\documentclass[11pt]{amsart}

\usepackage{amsmath, amssymb,  mathabx, amsfonts,enumerate}
\usepackage[all]{xy}
\usepackage{amscd,stmaryrd}
\usepackage[all]{xy}
\usepackage{comment}
\usepackage{mathtools}
\usepackage{tikz} 
\usepackage{tikz-cd} 
\tikzcdset{diagrams={nodes={inner sep=7.5pt}}}

\usepackage{url}
\usepackage{hyperref}

\newtheorem{theorem}{Theorem}[section]
\newtheorem{lemma}[theorem]{Lemma}
\newtheorem{corollary}[theorem]{Corollary}
\newtheorem{proposition}[theorem]{Proposition}
\newtheorem{theoremletter}{Theorem}

\newtheorem{conjecture}[theorem]{Conjecture}

 \theoremstyle{definition}
 \newtheorem{definition}[theorem]{Definition}

  \newtheorem*{example*}{Example}

\numberwithin{equation}{section}
 
\newcommand {\N}{\mathbb{N}} 
 
\newcommand {\R}{\mathbb{R}} 
 
\newcommand {\C}{\mathbb{C}}

\newcommand{\GG}{\mathcal{G}}


%

\DeclareMathOperator{\Id}{Id}


\begin{document}
\title[Generalized Gottschalk's conjecture for sofic groups]
{Generalized Gottschalk's conjecture for sofic groups and applications}   
\author[Xuan Kien Phung]{Xuan Kien Phung}
\address{Département de Mathématiques et de Statistique, Université de Montréal, Montréal, Québec, H3T 1J4, Canada.}
\address{Département d'informatique et de recherche opérationnelle,  Université de Montréal, Montréal, Québec, H3T 1J4, Canada.}
\email{phungxuankien1@gmail.com; \quad xuan.kien.phung@umontreal.ca} 
\subjclass[2020]{05C25, 14A10, 16S34, 20C07, 20F69, 37B10, 37B15, 37B51, 68Q80}
\keywords{sofic group, amenable group, surjunctivity, Gottschalk's conjecture, cellular automata, group rings,  post-surjectivity, pre-injectivity, invertibility}

\begin{abstract}
We establish generalizations of the well-known surjunctivity theorem  of Gromov and Weiss as well as the dual-surjunctivity theorem of Capobianco, Kari and Taati for cellular automata (CA) to local perturbations of CA over sofic group universes. We also extend the results to a  class of non-uniform cellular automata (NUCA) consisting of global perturbations with uniformly bounded singularity of CA. 
As an application, we obtain the surjunctivity of algebraic NUCA with uniformly bounded singularity over sofic groups. Moreover, we prove the stable finiteness of twisted group rings over sofic groups to generalize known results on Kaplansky's stable finiteness conjecture for group rings. 
\end{abstract}
\date{\today}
\maketitle
  
\setcounter{tocdepth}{1}

\section{Introduction}  
In computer science, CA is an interesting and powerful model of computation and 
of physical and biological simulation. It is well-known that the CA called Game of Life of Conway \cite{GOL} is Turing complete. Notable recent mathematical theory of CA achieves a  dynamical characterization of amenable groups by the Garden of Eden theorem for CA (cf. \cite{moore}, \cite{myhill}, \cite{ceccherini}, \cite{bartholdi-kielak}) and establishes the equivalence between Kaplansky's stable finiteness conjecture for group rings and the Gottschalk's surjunctivity conjecture for linear CA (cf. \cite{phung-geometric}, \cite{phung-weakly}, \cite{phung-cjm}, \cite{cscp-model}). In this paper, we study the generalized Gottschalk's surjunctivity conjecture for NUCA over the wide class of sofic groups
and obtain several geometric and algebraic applications. 
\par  
\subsection{Preliminaries}
For the main results, we recall briefly notions of symbolic dynamics. 
Given a discrete set $A$ and a group $G$, a \emph{configuration} $x \in A^G$ is  simply a map $x \colon G \to A$.  Two configurations $x,y  \in A^G$ are \emph{asymptotic} if  $x\vert_{G \setminus E}=y\vert_{G \setminus E}$ for some finite subset $E \subset G$.   The \emph{Bernoulli shift} action $G \times A^G \to A^G$ is defined by $(g,x) \mapsto g x$, 
where $(gx)(h) =  x(g^{-1}h)$ for  $g,h \in G$,  $x \in A^G$. The \emph{full shift} $A^G$ is equipped with the prodiscrete topology. For each $x \in A^G$, we define $\Sigma(x) = \overline{\{gx \colon g \in G\}} \subset A^G$ as the smallest closed subshift containing~$x$. 
\par 
Following von Neumann and Ulam  \cite{neumann}, we define a CA over the group $G$ (the \emph{universe}) and the set $A$ (the \emph{alphabet})  as  a $G$-equivariant and uniformly continuous  self-map $A^G \righttoleftarrow$  (cf.~\cite{hedlund-csc}, \cite{hedlund}).  One refers to each element $g \in G$ as a cell of the universe. Then every CA is uniform in the sense that all the cells follow the same local transition map. More generally, when different cells can evolve according to different local transition maps to break down the above  uniformity, we obtain NUCA (cf. \cite[Definition~1.1]{phung-tcs}, \cite{Den-12a}, \cite{Den-12b}):   

\begin{definition}
\label{d:most-general-def-asyn-ca}
Given a group $G$ and an alphabet $A$,  let $M \subset G$ and let $S = A^{A^M}$ be the set of all maps $A^M \to A$. For  $s \in S^G$, we define the NUCA $\sigma_s \colon A^G \to A^G$  by  
$
\sigma_s(x)(g)=  
    s(g)((g^{-1}x)  
	\vert_M)$ for all $x \in A^G$ and $g \in G$. 
 \end{definition} 
\par 
The set $M$ is called a \emph{memory} and $s \in S^G$  the \textit{configuration of local transition maps or local defining maps} of $\sigma_s$.  Every CA is thus a NUCA with finite memory and constant configuration of local defining maps. Conversely, we regard NUCA as \emph{global perturbations} of CA.  When $s, t \in S^G$ are asymptotic,  $\sigma_s$ is a \emph{local perturbation} of  $\sigma_t$ and vice versa. We observe also that NUCA with finite memory are precisely uniformly continuous selfmaps  $A^G \righttoleftarrow$. As for CA, such NUCA satisfy the closed image property \cite[Theorem~4.4]{phung-tcs}, several decidable properties \cite{Den-12a}, \cite{phung-decidable-nuca} 
and  variants of the Garden of Eden theorem \cite{phung-GOE-NUCA}.
\par 
We will analyze in this paper the relations between the following fundamental stable dynamic properties of $\sigma_s$. 
We say that   $\sigma_s$ is \emph{pre-injective} if $\sigma_s(x) = \sigma_s(y)$ implies $x= y$ whenever $x, y \in A^G$ are asymptotic. Similarly, $\sigma_s$ is \emph{post-surjective} if for all $x, y \in A^G$ with $y$ asymptotic to $\sigma_s(x)$,  then   $y= \sigma_s(z)$ for some $z \in A^G$ asymptotic to $x$. We say that $\sigma_s$ is  \emph{stably injective}, resp. \emph{stably post-surjective},  if  $\sigma_p$ is injective, resp. post-surjective,   for every configuration $p \in \Sigma(s)$. 
\par 
The NUCA  $\sigma_s$ is said to be \emph{invertible} if it is bijective and the inverse map  $\sigma_s^{-1}$ is a NUCA with {finite} memory \cite{phung-tcs}. We define also \emph{stable invertibility} which is in general stronger than invertibility, namely, $\sigma_s$ is stably invertible if there exists $N\subset G$ finite and $t \in T^G$ where $T=A^{A^N}$ such that for every $p \in \Sigma(s)$, we have   $\sigma_p \circ \sigma_q=\sigma_q \circ \sigma_p=\Id$  for some $q \in \Sigma(t)$.  In fact, the next lemma shows that every invertible NUCA with finite memory over a finite alphabet and a countable group universe is automatically stably invertible.

\begin{lemma}
\label{l:equivalent-stable-invertible} 
Let $G$ be a countable group and let $M\subset G$ be a finite subset. Let $A$ be a finite alphabet and let $s\in S^G$ where $S= A^{A^M}$. Suppose that $\sigma_s$ is invertible. Then $\sigma_s$ is also stably invertible. 
\end{lemma}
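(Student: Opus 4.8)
The plan is to exploit compactness of the shift space together with the fact that ``being a two-sided inverse pair'' is a closed, shift-invariant condition. First I would record the data witnessing stable invertibility. Since $\sigma_s$ is invertible, its inverse $\sigma_s^{-1}$ is a NUCA with finite memory, so (as NUCA with finite memory are exactly the uniformly continuous self-maps) there are a finite $N \subset G$ and $t \in T^G$, $T = A^{A^N}$, with $\sigma_t = \sigma_s^{-1}$, that is $\sigma_s \circ \sigma_t = \sigma_t \circ \sigma_s = \Id$. These $N$ and $t$ will be the data in the definition of stable invertibility; the only thing left is to produce, for each $p \in \Sigma(s)$, a matching $q \in \Sigma(t)$.

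Next I would use the conjugation formula for the shift. A direct computation shows that for every $u$ (in $S^G$ or $T^G$) and every $g \in G$ one has $\sigma_{gu}(x) = g\,\sigma_u(g^{-1}x)$, i.e. shifting the defining configuration conjugates the NUCA by the Bernoulli shift. Consequently $\sigma_{gs} \circ \sigma_{gt} = g(\sigma_s \circ \sigma_t)g^{-1} = \Id$ and likewise $\sigma_{gt} \circ \sigma_{gs} = \Id$, so every shifted pair $(gs, gt)$ is a mutually inverse pair. I then introduce the relation
\[
R = \{(p,q) \in S^G \times T^G : \sigma_p \circ \sigma_q = \sigma_q \circ \sigma_p = \Id\},
\]
which contains the whole diagonal orbit $\{(gs, gt) : g \in G\}$, and I claim $R$ is closed and invariant under the diagonal shift. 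Invariance is immediate from the conjugation formula. For closedness, note that $A$ finite makes $S$ and $T$ finite, so $S^G \times T^G$ is compact; and for fixed $g$ the coordinate $\sigma_p(\sigma_q(x))(g)$ depends only on $p(g)$, on $q|_{gM}$, and on $x|_{g(MN)}$, where $MN = \{mn : m \in M,\ n \in N\}$. Hence the requirement $\sigma_p(\sigma_q(x))(g) = x(g)$ \emph{for all} $x$ reduces to checking finitely many patterns on the finite set $g(MN) \cup \{g\}$, which is a clopen condition on the finitely many coordinates $(p(g), q|_{gM})$; intersecting over all $g$ and both compositions exhibits $R$ as an intersection of clopen sets, hence closed.

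Finally I would conclude by projection. Let $\Sigma(s,t)$ denote the orbit closure of $(s,t)$ under the diagonal shift. Since $R$ is closed, shift-invariant and contains the diagonal orbit, it contains its closure: $\Sigma(s,t) \subseteq R$. The coordinate projections $\pi_1, \pi_2$ are continuous and equivariant, so $\pi_1(\Sigma(s,t))$ is a closed shift-invariant set containing the orbit of $s$, forcing $\pi_1(\Sigma(s,t)) = \Sigma(s)$, and likewise $\pi_2(\Sigma(s,t)) = \Sigma(t)$. Thus, given $p \in \Sigma(s)$, I pick $q$ with $(p,q) \in \Sigma(s,t)$; then $q \in \Sigma(t)$ and $(p,q) \in R$, i.e. $\sigma_p \circ \sigma_q = \sigma_q \circ \sigma_p = \Id$, which is precisely stable invertibility for the chosen $N$ and $t$. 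The main obstacle is the closedness of $R$: verifying that being a two-sided inverse pair is genuinely a finitary condition is where finiteness of $A$ and of the memories $M, N$ is essential; the conjugation formula, shift-invariance, and the projection argument are then formal.
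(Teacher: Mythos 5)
Your proof is correct, but it follows a genuinely different (and more self-contained) route than the paper. The paper simply invokes an external result (Theorem~11.1 of the cited work on invertible NUCA) twice: once to get, from $\sigma_s\circ\sigma_t=\Id$, some $q\in\Sigma(t)$ with $\sigma_p\circ\sigma_q=\Id$, and once more to get $r\in\Sigma(s)$ with $\sigma_q\circ\sigma_r=\Id$; it then concludes with the algebraic observation that a map with both a left inverse and a right inverse is bijective, so that $\sigma_p\circ\sigma_q=\Id$ forces $\sigma_q\circ\sigma_p=\Id$. You instead prove the needed transfer principle from scratch: the conjugation identity $\sigma_{gu}=g\circ\sigma_u\circ g^{-1}$ shows the diagonal orbit of $(s,t)$ lies in the relation $R$ of mutually inverse pairs, the finiteness of $A$, $M$, $N$ makes $R$ an intersection of clopen conditions (hence closed in the compact space $S^G\times T^G$), and projecting the orbit closure $\Sigma(s,t)$ onto each factor yields, for every $p\in\Sigma(s)$, a two-sided inverse $q\in\Sigma(t)$ in one stroke. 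What your approach buys is independence from the cited theorem and a simultaneous treatment of both compositions, avoiding the paper's left-inverse/right-inverse trick; what the paper's approach buys is brevity, since the heavy lifting (essentially the same compactness argument you carry out) is delegated to the reference. Incidentally, your argument never uses countability of $G$, only finiteness of $A$, $M$, $N$, so it is if anything slightly more general; all steps (the conjugation formula, closedness of $R$, and the projection argument) check out.
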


\begin{proof}
Since $\sigma_s$ is invertible, there exists $N \subset G$ finite and $t \in T^G$ where $T=A^{A^N}$ such that $\sigma_s \circ \sigma_t = \sigma_t \circ \sigma_s=\Id$. For every $p \in \Sigma(s)$, we obtain from   the relation $\sigma_s \circ \sigma_t =\Id$ and \cite[Theorem 11.1]{phung-tcs}  some $q \in \Sigma(t)$ such that $\sigma_p \circ \sigma_q=\Id$. Since $q \in \Sigma(t)$ and $\sigma_t \circ \sigma_s=\Id $, another application of \cite[Theorem 11.1]{phung-tcs} shows that there exists $r \in \Sigma(s)$ such that $\sigma_q \circ \sigma_r=\Id$. It follows that $\sigma_q$ is invertible and thus $\sigma_p \circ \sigma_q=\sigma_q \circ \sigma_p=\Id$. We conclude that $\sigma_s$ is stably invertible. 
\end{proof}

\par 
Note that for CA, stable invertibility, resp. stable injectivity, resp. stable post-surjectivity, is equivalent to invertibility, resp.   injectivity, resp. post-surjectivity since $\Sigma(s)=\{s\}$ if $s \in S^G$ is constant. 

\subsection{Main dynamical results}

\subsubsection{Stable surjunctivity for sofic groups}
The surjunctivity conjecture of Gottschalk \cite{gottschalk}  asserts that over any group universe, every injective CA with finite alphabet must be surjective. 
Over sofic group universes (Section  \ref{s:sofic-groups}), the surjunctivity conjecture was famously shown by Gromov and Weiss in  \cite{gromov-esav},  \cite{weiss-sgds}. The vast class of sofic groups was first  introduced by Gromov  \cite{gromov-esav} and includes all amenable groups and all residually finite groups. The question of whether there exists a non-sofic group is still open. 
\par 
 The situation for the surjunctivity of NUCA is more subtle. While stably injective NUCA with finite memory may fail to be  surjective (see \cite[Example 14.3]{phung-tcs}), results in \cite{phung-tcs} show that stably injective local perturbations of CA with finite memory over an amenable group or an residually finite group universe must be surjective. 
More generally, we establish the following generalization of the Gromov-Weiss surjunctivity theorem to cover all {stably injective} local perturbations of CA over sofic group universes. 
 
\par 
\begin{theoremletter}
\label{t:main-A}
Let $M$  be a finite subset of a finitely generated  sofic group $G$. Let $A$ be a finite  alphabet and  $S=A^{A^M}$. Suppose that $\sigma_s$ is stably injective for some asymptotically constant $s \in S^G$. Then $\sigma_s$ is stably invertible.  
\end{theoremletter}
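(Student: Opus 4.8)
The plan is to reduce the whole statement to the \emph{invertibility} of the single map $\sigma_s$ and then invoke Lemma~\ref{l:equivalent-stable-invertible}: since a finitely generated group is countable and $A,M$ are finite, that lemma upgrades invertibility of $\sigma_s$ to stable invertibility for free, so the entire content lies in proving that $\sigma_s$ is bijective with finite-memory inverse. If $G$ is finite the claim is immediate, as $A^G$ is then finite and an injective selfmap of a finite set is bijective; so I assume $G$ infinite. Writing $c\in S^G$ for the constant configuration (value $c_0$) to which $s$ is asymptotic and $E\subset G$ for the finite set off which $s=c$, I first locate two members of $\Sigma(s)$: trivially $s\in\Sigma(s)$, and since $G$ is infinite the translates $g_n s$ push the finite defect $g_nE$ out of every window, so $g_n s\to c$ and $c\in\Sigma(s)$. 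Stable injectivity therefore gives, in particular, that both $\sigma_s$ and the genuine cellular automaton $\tau:=\sigma_c$ are injective.

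Next I would invertibilize the underlying CA. The map $\tau=\sigma_c$ is an injective CA with finite memory over a finite alphabet and a sofic universe, so the Gromov--Weiss surjunctivity theorem \cite{gromov-esav}, \cite{weiss-sgds} makes it surjective, hence bijective. Being a continuous bijection of the compact space $A^G$ that commutes with the $G$-action, $\tau$ is a homeomorphism whose inverse is again $G$-equivariant and uniformly continuous, i.e. $\tau^{-1}=\sigma_{c'}$ is itself a CA with some finite memory $N\subset G$ and constant defining map $c'$.

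The key step is to turn $\sigma_s$ into a local perturbation of the identity. Set $\psi:=\tau^{-1}\circ\sigma_s=\sigma_{c'}\circ\sigma_s$, a finite-memory NUCA that is injective as a composite of the injective maps $\tau^{-1}$ and $\sigma_s$. Because $s$ and $c$ carry the same local rule off $E$, one has $\sigma_s(x)=\sigma_c(x)$ on $gN$ whenever $gN\cap E=\emptyset$, so $\psi(x)(g)=\tau^{-1}(\sigma_c x)(g)=x(g)$ for every $g$ outside the finite set $E':=EN^{-1}$. Thus $\psi$ alters a configuration only on the finite set $E'$, while reading only a bounded neighbourhood of it. Surjectivity of $\psi$ is now a finite counting matter: fix any target $z\in A^G$, put $x=z$ off $E'$, and consider the induced selfmap $\Phi\colon A^{E'}\to A^{E'}$, $w\mapsto \psi(x_w)\vert_{E'}$, where $x_w$ equals $z$ off $E'$ and $w$ on $E'$. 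If $\Phi(w)=\Phi(w')$ then $\psi(x_w)=\psi(x_{w'})$, since the two already agree off $E'$; injectivity of $\psi$ forces $w=w'$, so $\Phi$ is an injective selfmap of the finite set $A^{E'}$, hence bijective, and $z$ is attained. Therefore $\psi$ is surjective, and $\sigma_s=\tau\circ\psi$ is surjective as a composite of surjections. Combined with the injectivity of $\sigma_s$ this gives bijectivity; as a continuous bijection of the compact metrizable space $A^G$, the map $\sigma_s$ is a homeomorphism, so $\sigma_s^{-1}$ is uniformly continuous, i.e. a finite-memory NUCA, and $\sigma_s$ is invertible. Lemma~\ref{l:equivalent-stable-invertible} then delivers stable invertibility.

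The main obstacle is the second step — producing a genuine two-sided CA inverse $\tau^{-1}=\sigma_{c'}$ — since this is exactly where the sofic hypothesis does its work through Gromov--Weiss; everything afterwards is soft (compactness, together with a finite injective-implies-bijective count). A secondary point needing care is the bookkeeping showing $\psi$ equals the identity off a finite set: one must track how the memory $N$ of $\tau^{-1}$ enlarges the defect $E$ to $E'=EN^{-1}$ and verify cell by cell that equality of local rules off $E$ forces $\sigma_s=\sigma_c$ there. I also note that, in this asymptotically constant regime, stable injectivity is used only through the injectivity of $\sigma_s$ and of its limit automaton $\sigma_c=\tau$, since every other element of $\Sigma(s)$ is a shift-conjugate $g\sigma_s g^{-1}$ of $\sigma_s$ and hence injective precisely when $\sigma_s$ is.
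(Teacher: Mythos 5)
Your proposal is correct, but it takes a genuinely different route from the paper. The paper first produces an asymptotically constant left inverse $t$ with $\sigma_t\circ\sigma_s=\Id$ (Lemma~\ref{l:reversible-asymp-constant}) and then reruns the Gromov--Weiss sofic-approximation counting argument directly on the perturbed NUCA: it transplants the local rules of $s$ and $t$ onto a finite $\Delta$-labeled graph $\GG$, placing the perturbation at the centers of a packing of disjoint balls, and derives an entropy contradiction from non-surjectivity. You instead invoke the classical Gromov--Weiss theorem as a black box for the limit CA $\tau=\sigma_c$ (which lies in $\Sigma(s)$, hence is injective, hence bijective with CA inverse $\sigma_{c'}$ by Curtis--Hedlund), and reduce everything to the map $\psi=\sigma_{c'}\circ\sigma_s$, an injective finite-memory NUCA that agrees with the identity off the finite set $E'=EN^{-1}$; surjectivity of $\psi$ then follows from the pigeonhole principle applied to the injective self-map $w\mapsto\psi(x_w)\vert_{E'}$ of the finite set $A^{E'}$. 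All the individual steps check out (the verification that $\psi(x)(g)=x(g)$ for $g\notin EN^{-1}$, the agreement of $\psi(x_w)$ with $z$ off $E'$, and the upgrade to stable invertibility via Lemma~\ref{l:equivalent-stable-invertible}). What your approach buys is brevity and modularity: soficity enters only through the citation of the surjunctivity theorem for the genuine CA $\sigma_c$, and your argument makes explicit that only injectivity of $\sigma_s$ and of $\sigma_c$ is used, which is exactly the hypothesis of case (ii) of the Corollary. What the paper's approach buys is a self-contained counting template that is reused almost verbatim for the dual-surjunctivity results (Theorems~\ref{t:main-C} and \ref{t:main-D}), where the trick of composing with the inverse of the limit automaton is not available in the same form.
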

\par 
Combining with the result in \cite{phung-tcs} where we can replace the stable injectivity by the weaker injectivity condition whenever $G$ is an amenable group, we obtain the following general surjunctivity and invertibility result for NUCA which are local perturbations of classical CA. 
\par 
\begin{corollary}
  Let $M$  be a finite subset of a group $G$. Let $A$ be a finite alphabet and  $S=A^{A^M}$. Let $s \in S^G$ be an asymptotic constant configuration such that $\sigma_s$ is injective. Then $\sigma_s$ is stably invertible in the following cases: 
  \begin{enumerate}[\rm (i)] 
      \item $G$ is an amenable group;
      \item 
      $G$ is a sofic group and $\sigma_c$ is injective. 
  \end{enumerate}
\end{corollary}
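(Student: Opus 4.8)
The plan is to reduce both cases to Theorem~\ref{t:main-A} by first showing that $\sigma_s$ is stably injective and then passing to a finitely generated universe. Fix a finite set $E \subset G$ and a constant configuration $c \in S^G$ with $s\vert_{G \setminus E} = c\vert_{G \setminus E}$, so that $\sigma_c$ is the CA appearing in case~(ii) and $\sigma_s$ is a local perturbation of it; we may assume $G$ is infinite, the finite case being immediate. The first step is to identify the orbit closure $\Sigma(s)$. From Definition~\ref{d:most-general-def-asyn-ca} one gets the equivariance $\sigma_{gs} = g \circ \sigma_s \circ g^{-1}$ for every $g \in G$ (with $g$ acting by the shift), so each $\sigma_{gs}$ is injective exactly when $\sigma_s$ is. Since $E$ is finite, any element of $\Sigma(s)$ which is not a translate $g s$ must equal $c$: if a net $g_i s$ converges to $p$ with $p(h_0)\neq c(h_0)$, then $g_i^{-1}h_0 \in E$ along a subnet, forcing $g_i$ to be eventually constant and $p$ to be a translate. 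Hence $\Sigma(s) = \{g s : g \in G\} \cup \{c\}$, and $\sigma_s$ is stably injective if and only if both $\sigma_s$ and $\sigma_c$ are injective.

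It then remains to secure injectivity of $\sigma_c$. In case~(ii) this is assumed outright, so $\sigma_s$ is stably injective. In case~(i) I invoke the cited result of \cite{phung-tcs}: over an amenable group, injectivity of a local perturbation $\sigma_s$ already upgrades to stable injectivity; equivalently, over amenable groups injectivity of $\sigma_s$ forces injectivity of $\sigma_c$. This is exactly the passage from the stable injectivity hypothesis of Theorem~\ref{t:main-A} to the weaker injectivity hypothesis. Thus in both cases $\sigma_s$ is stably injective.

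Next I would reduce to a finitely generated universe, as Theorem~\ref{t:main-A} requires. Set $H = \langle M \cup E\rangle \leq G$, a finitely generated subgroup which is sofic in case~(ii) and amenable (hence sofic) in case~(i), soficity and amenability both passing to subgroups. Since $M \subseteq H$, the value $\sigma_s(x)(g)$ depends only on $x\vert_{gH}$, so $\sigma_s$ decomposes as a product $\prod_{gH} \sigma_s^{gH}$ over the right cosets of $H$; as $E \subseteq H$, the factor on the coset $H$ is the local perturbation $\sigma_{s\vert_H}$ over $H$, which is stably injective by the above, while every other factor is the CA $\sigma_c$ over $H$, injective by the previous paragraph. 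Theorem~\ref{t:main-A} then makes $\sigma_{s\vert_H}$ stably invertible over $H$; and $\sigma_c$ over the sofic group $H$, being injective, is bijective by the Gromov--Weiss theorem \cite{gromov-esav}, \cite{weiss-sgds} and so invertible with finite memory by \cite{hedlund}, hence stably invertible (the orbit closure of a constant configuration being a single point).

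Finally I would assemble these factors into stable invertibility of $\sigma_s$ over $G$. Only two kinds of factor occur, so their finite inverse memories are bounded by a common finite $N \subset G$; combining the corresponding inverse local maps coset by coset produces $t \in T^G$ with $T = A^{A^N}$, and the description $\Sigma(s) = \{g s\}\cup\{c\}$ together with the equivariance of inverses yields, for each $p \in \Sigma(s)$, an inverse $\sigma_q$ with $q \in \Sigma(t)$. I expect this last step — matching the per-coset inverses into one uniformly bounded inverse configuration and checking the $\Sigma(s)$/$\Sigma(t)$ condition of stable invertibility — to be the main bookkeeping obstacle; here I would lean on Lemma~\ref{l:equivalent-stable-invertible} and \cite[Theorem~11.1]{phung-tcs} to convert the exhibited finite-memory inverse of $\sigma_s$ into genuine stable invertibility, thereby concluding in both cases.
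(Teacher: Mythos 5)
Your proposal is correct and follows essentially the route the paper intends (the corollary is stated there without proof): identify $\Sigma(s)=\{gs\colon g\in G\}\cup\{c\}$, deduce stable injectivity from injectivity of $\sigma_s$ and of $\sigma_c$ --- the latter given by hypothesis in case (ii) and by the cited amenable-group result of \cite{phung-tcs} in case (i) --- and apply Theorem~\ref{t:main-A}. Your coset decomposition over $H=\langle M\cup E\rangle$ correctly fills in the reduction to the finitely generated case that the paper leaves implicit (Theorem~\ref{t:main-A} assumes $G$ finitely generated, the corollary does not); just note that in the final assembly it is cleaner to argue directly from the explicit descriptions of $\Sigma(s)$ and $\Sigma(t)$ than via Lemma~\ref{l:equivalent-stable-invertible}, which assumes $G$ countable.
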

 
 \par 
 To deal with global perturbations of CA, we introduce the following notion of uniformly bounded singularity. 
\begin{definition} A NUCA $\sigma_s \colon A^G \to A^G$ has \emph{uniformly bounded singularity} if for every  $E\subset G$ finite with $1_G\in E=E^{-1}$, we can find a finite subset $F\subset G$ containing $E$  such that the restriction  $s\vert_{FE\setminus F}$ is constant. In this case, we also say that the configuration $s$ has \emph{uniformly bounded singularity}. 
\end{definition}
It is clear that $\sigma_s$ has uniformly bounded singularity if  $s$ is asymptotically constant. When the universe $G$ is a residually finite group, our notion of uniformly bounded singularity is closely related but not equivalent to the notion of (periodically) bounded singularity for NUCA defined in   \cite[Definition~10.1]{phung-tcs}. 
The next result confirms that  Theorem~\ref{t:main-A} can be generalized to NUCA with uniformly bounded singularity. 
\begin{theoremletter}
    \label{t:main-B} 
 Let $M$  be a finite subset of a finitely generated  sofic group $G$. Let $A$ be a finite alphabet and let $S=A^{A^M}$. Suppose that $\sigma_s \colon A^G \to A^G$ is stably injective for some $s \in S^G$ with uniformly bounded singularity.  Then $\sigma_s$ is stably invertible.     
\end{theoremletter}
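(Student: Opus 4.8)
The plan is to deduce stable invertibility from mere surjectivity, and to obtain surjectivity by adapting the sofic approximation argument used for Theorem~\ref{t:main-A}, the new input being the constant buffer annulus provided by the uniformly bounded singularity hypothesis.

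I would first record the reduction. Because $s\in\Sigma(s)$, stable injectivity already gives that $\sigma_s$ is injective. Since $\sigma_s$ has finite memory $M$ it is a uniformly continuous self-map of the compact metrizable space $A^G$ (the group $G$ is countable, being finitely generated), so once $\sigma_s$ is known to be surjective it is a homeomorphism whose inverse is again uniformly continuous, i.e. a NUCA with finite memory; thus $\sigma_s$ is invertible. Lemma~\ref{l:equivalent-stable-invertible} then promotes invertibility to stable invertibility. Hence the entire statement reduces to showing that a stably injective $\sigma_s$ with uniformly bounded singularity over a finitely generated sofic group is surjective.

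For surjectivity I would argue by contradiction along the lines of Theorem~\ref{t:main-A}. If $\sigma_s$ were not surjective, the closed image property \cite[Theorem~4.4]{phung-tcs} yields a finite set $\Omega\subset G$ and a pattern $q\in A^\Omega$ lying outside $\sigma_s(A^G)\vert_\Omega$. I would then fix a sofic approximation of $G$ and a window $E$ with $1_G\in E=E^{-1}$ large enough to contain $\Omega$, $M$ and the radius of the approximation; applying the uniformly bounded singularity hypothesis to this $E$ produces a finite block $F\supseteq E$ on whose thick outer annulus $FE\setminus F$ the configuration $s$ is constant, say equal to $a\in S$. On a finite model $V_n$ I would embed a single faithful copy of $F$ at a good vertex, assign to it the genuine rules $s\vert_F$, and assign the uniform rule $a$ to every other vertex; the constant annulus guarantees that this assignment glues consistently across the chart boundary, and it realises the forbidden pattern $q$ at the embedded copy of $\Omega$. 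The resulting induced self-map of $A^{V_n}$ therefore misses $q$ at that location, so it is not surjective, whereas transferring the injectivity of $\sigma_s$ to the finite model would force it to be an injective, hence surjective, self-map of the finite set $A^{V_n}$ --- the desired contradiction.

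The crux, and the step I expect to be the main obstacle, is this transfer of injectivity to the finite model in the presence of spatially varying local rules. In the asymptotically constant case of Theorem~\ref{t:main-A} the bulk rule is a fixed CA and one may quote the Gromov--Weiss counting directly; here the bulk rule is only the annulus value $a$, so I would need the counting to be driven by stable injectivity across all of $\Sigma(s)$. The point is that, by choosing $E$ large, the annulus supplies $s$ itself with an all-$a$ window of any prescribed radius, so every local rule pattern seen around a good vertex of $V_n$ --- both the constant-$a$ windows in the bulk and the windows meeting $F$ --- is a window of a configuration in $\Sigma(s)$, and stable injectivity makes every such $\sigma_p$ injective. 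Establishing that this local injectivity, together with the buffering annulus, yields the required cardinality inequality for the induced finite map is the technical heart; once it is in place, the contradiction above proves surjectivity and the reduction of the first paragraph completes the proof.
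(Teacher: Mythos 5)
Your opening reduction (surjectivity $\Rightarrow$ invertibility via compactness and uniform continuity, then Lemma~\ref{l:equivalent-stable-invertible}) is correct and matches what the paper does. The gap is in the surjectivity argument, and it is twofold. First, the contradiction you aim for --- ``transferring the injectivity of $\sigma_s$ to the finite model would force it to be an injective, hence surjective, self-map of the finite set $A^{V_n}$'' --- is not available over sofic groups: the induced map on a sofic approximation is only modelled on $\sigma_s$ near a $(1-\varepsilon)$-fraction of vertices and is genuinely non-injective because of bad vertices and boundary effects. The actual mechanism, both in Gromov--Weiss and in Theorem~\ref{t:main-A-proof}, is an entropy count: one uses the finite-memory \emph{left inverse} $\sigma_t$ supplied by stable injectivity (reversibility, \cite[Theorem~A]{phung-tcs}) to build a second finite map $\Psi$ with $\Psi\circ\Phi=\pi$, which gives the lower bound $\vert\Phi(A^{V(R)})\vert\geq\vert A\vert^{\vert V(3R)\vert}$ to play against the upper bound coming from non-surjectivity. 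You never introduce $\sigma_t$, and ``every $\sigma_p$ with $p\in\Sigma(s)$ is injective'' does not by itself produce that lower bound; you explicitly defer this step as ``the technical heart,'' so the central estimate is missing.

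Second, even within the correct counting framework, embedding a \emph{single} faithful copy of $F$ realising the forbidden pattern yields an entropy deficit of only one multiplicative factor $\bigl(1-\vert A\vert^{-\vert B_\Delta(R)\vert}\bigr)$, which cannot contradict $\vert V(3R)\vert\geq(1-\varepsilon)\vert V\vert$; in Theorem~\ref{t:main-A-proof} the deficit is raised to the power $\vert W\vert\geq \vert V(3R)\vert/\vert B_\Delta(2R)\vert$ precisely because the defective window is planted on a positive density of pairwise disjoint balls, whereas in your assignment the bulk balls carry only the constant rule $a$, about whose image on $B_\Delta(R)$ nothing is known. The paper avoids all of this with Lemma~\ref{l:main-lemma-singular}: using the constant annulus it replaces $s$ and its left inverse $t$ by \emph{asymptotically constant} configurations $p,q$ that agree with $s,t$ on a prescribed window containing the witness of non-surjectivity and still satisfy $\sigma_q\circ\sigma_p=\Id$; then $\sigma_p$ is stably injective but non-surjective, contradicting Theorem~\ref{t:main-A} directly, with no new sofic counting. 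To repair your route you would either have to rerun the full $\Phi/\Psi$ argument with spatially varying rules and a positive density of defects, or adopt the reduction of Lemma~\ref{l:main-lemma-singular}.
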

In particular, our result extends \cite[Theorem~C]{phung-tcs} to cover the class of NUCA with uniformly bounded singularity over sofic group universes. Note again that \cite[Example 14.3]{phung-tcs} implies that we cannot remove the uniformly singularity condition in the above theorem.

\subsubsection{Stable dual-surjunctivity for sofic groups}
Our next  results concern the dual-surjunctivity  version of Gottschalk's conjecture was introduced recently by 
Capobianco, Kari, and Taati in \cite{kari-post-surjective} which  states that   every post-surjective CA over a group universe and a finite alphabet is also pre-injective. Moreover, the authors settled in the same paper \cite{kari-post-surjective} the dual-surjunctivity conjecture for CA over sofic universes. See also \cite{phung-post-surjective} for some extensions.  
\par 
We establish the following  extension of the dual-surjunctivity theorem of Capobianco, Kari, and Taati to the class of post-surjective local perturbations of CA over sofic groups.  

\begin{theoremletter}
\label{t:main-C} 
Let $G$ be a finitely generated sofic group and let $A$ be a finite alphabet.  Let $M\subset G$ be finite and $S=A^{A^M}$. Let $s \in S^G$ be  asymptotically constant such that $\sigma_s$ is stably post-surjective. Then $\sigma_s$ is stably invertible. 
\end{theoremletter}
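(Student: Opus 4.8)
The plan is to reduce the assertion to the plain invertibility of the single NUCA $\sigma_s$ and then to apply Lemma~\ref{l:equivalent-stable-invertible}: since a finitely generated sofic group is countable and $M$, $A$ are finite, once $\sigma_s$ is shown to be bijective with a finite-memory inverse it is automatically stably invertible. I would begin by recording the relevant structure of $\Sigma(s)$. Let $D\subset G$ be the finite set of cells on which $s$ differs from its value at infinity, and write $\bar c\in S^G$ for the constant configuration carrying that value. Every translate $gs$ agrees with $\bar c$ off $gD$, so $gs\to\bar c$ and hence $\bar c\in\Sigma(s)$; thus stable post-surjectivity forces the genuine CA $\sigma_{\bar c}$ to be post-surjective, and, $G$ being sofic, the dual-surjunctivity theorem of Capobianco--Kari--Taati \cite{kari-post-surjective} makes $\sigma_{\bar c}$ pre-injective and therefore a reversible CA of some finite memory. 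The key structural observation is that, because $s$ equals $c$ off $D$, the outputs $\sigma_s(x)$ and $\sigma_{\bar c}(x)$ coincide at every cell outside $D$, for all $x\in A^G$; so away from the fixed finite region $D$ the map $\sigma_s$ is governed by the homogeneous reversible rule $\sigma_{\bar c}$.

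Surjectivity of $\sigma_s$ is the easy half: $\sigma_s$ has finite memory, hence closed image \cite[Theorem~4.4]{phung-tcs}, and post-surjectivity shows this image contains the whole asymptotic class of any point $\sigma_s(x_0)$, a class that is dense in $A^G$; a dense closed set is everything. For the inverse, I would use post-surjectivity together with compactness to extract a uniform finite correction radius $K$: any finite alteration of an output lifts to an alteration of the input supported in the $K$-neighbourhood of the altered cells. If in addition $\sigma_s$ is pre-injective, such a lift is unique, and the resulting assignment glues to a $G$-equivariant continuous, hence finite-memory, two-sided inverse. Consequently the whole theorem reduces to establishing that $\sigma_s$ is pre-injective.

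This pre-injectivity is the heart of the matter and the step I expect to be the main obstacle, since it requires transporting the Capobianco--Kari--Taati counting argument from uniform CA to a non-uniform rule. Suppose, for contradiction, that $\sigma_s(x)=\sigma_s(y)$ for asymptotic $x\neq y$ differing on a finite nonempty set $\Omega$. Fixing a sofic approximation of $G$ by finite graphs in which all but a vanishing fraction of vertices have a large ball isomorphic to the corresponding ball in the Cayley graph, I would implant many pairwise disjoint translated copies of the difference pattern into the bulk of the finite model, arranging the copies to avoid the single copy of the singular region $D$. On that bulk $\sigma_s$ acts as the homogeneous reversible CA $\sigma_{\bar c}$, so each implanted copy may be switched between the $x$-pattern and the $y$-pattern independently without changing the model output there. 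This manufactures exponentially many inputs (in the number of copies, which grows linearly with the model size) that share a common output on a large region, in violation of the bound on the number of such preimages forced by the uniform correction radius $K$. The delicate points are precisely the non-uniform ones: the loss of translation invariance is confined to the density-zero set $D$ together with the finitely many defective vertices of the approximation, so the per-vertex counting inequality survives, and the implantation must respect the local graph isomorphism of the sofic model.

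Granting pre-injectivity, the construction of the previous paragraph yields that $\sigma_s$ is bijective with a finite-memory inverse, i.e.\ invertible; Lemma~\ref{l:equivalent-stable-invertible} then promotes this to stable invertibility of $\sigma_s$, which is the desired conclusion.
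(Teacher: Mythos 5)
Your overall architecture matches the paper's (reduce to plain invertibility of $\sigma_s$ and promote it via Lemma~\ref{l:equivalent-stable-invertible}; prove pre-injectivity by a Capobianco--Kari--Taati-style counting on a sofic approximation; then use post-surjective plus pre-injective implies invertible, which is \cite[Theorem~13.4]{phung-tcs}), but the central implantation step has a genuine gap. Your hypothetical violators $x\neq y$ with $\sigma_s(x)=\sigma_s(y)$ are mutually erasable \emph{for the non-uniform rule $s$}; the only case that actually needs work is when the difference set $\Omega$ sits on or near the singular region $D$ (if $\Omega M^{-1}$ misses $D$, the pair is already erasable for $\sigma_{\bar c}$, which your own first paragraph rules out via \cite{kari-post-surjective}). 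Translating the difference pattern into the bulk of the finite model, where the rule is the constant $\bar c$, produces patterns that there is no reason to believe are erasable for $\sigma_{\bar c}$ --- indeed $\sigma_{\bar c}$ is post-surjective, hence pre-injective by Capobianco--Kari--Taati, so \emph{no} erasable pair for $\bar c$ exists and your implantation scheme is self-defeating. The paper's proof (Theorem~\ref{t:post-sur-implies-pre-inj-first}) resolves this by planting, at each of the pairwise disjoint balls $B(w,R)$, $w\in W$, a translated copy of the \emph{singular rule} $s\vert_{B_\Delta(R)}$ itself together with the pattern pair $p\vert_{B_\Delta(R)},q\vert_{B_\Delta(R)}$: the finite-model map $\Phi$ uses the local rule $s(\psi_{w,R}^{-1}(v))$ on those balls and $c(1_G)$ elsewhere, so each ball carries its own copy of the defect and the switching is legitimate; one must then re-prove surjectivity of this many-defect finite map, which is done cell by cell using the uniform correction set supplied by stable post-surjectivity.

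Second, your closing contradiction is aimed at the wrong target. There is no ``bound on the number of preimages forced by the uniform correction radius $K$'': a surjective map from $A^{V(R)}$ onto $A^{V(3R)}$ can have one enormous fiber without contradicting anything. The correct contradiction is between a lower and an upper bound on the \emph{size of the image} of $\Phi$: post-surjectivity (via the correction radius) forces $\Phi(A^{V(R)})=A^{V(3R)}$, while the switchable patterns force $\vert\Phi(A^{V(R)})\vert\le\vert A\vert^{\vert V(R)\setminus\overline{W}\vert}\bigl(\vert A\vert^{\vert B_\Delta(R)\vert}-1\bigr)^{\vert W\vert}$, and for a sufficiently good sofic approximation these two estimates are incompatible. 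Two minor points: the two-sided inverse of a NUCA is a finite-memory NUCA rather than a $G$-equivariant map, and the reversibility of $\sigma_{\bar c}$ plays no role in the argument.
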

\par 
Given a vector space $V$ which is not necessarily finite, $V^G$ is  a vector space with component-wise operations and a NUCA $\tau \colon V^G \to V^G$ is said to be \emph{linear} if it is also a linear map of vector spaces or equivalently, if its local transition maps are all linear. Linear NUCA with finite memory enjoy similar properties as linear CA such as the shadowing property  \cite{birkhoff-pseudo-orbit}, \cite{bowen-shadow-equilibrium}, \cite{kurka-97}, \cite{blanchard-maass-97},  \cite{meyerovitch-pseudo-orbit}, \cite{chung-shadow}, \cite{phung-israel}, \cite{phung-shadowing}, \cite{cscp-jpaa}, \cite{phung-laa}. 
The recent result \cite[Theorem D.(ii)]{phung-laa}  states that every stably post-surjective linear NUCA with finite vector space alphabet  over a residually finite group universe is invertible whenever it is a local perturbation of a linear CA. When specialized to the class of linear NUCA,   Theorem~\ref{t:main-C} thus generalizes \cite[Theorem D.(ii)]{phung-laa} because every residually finite group is sofic. 
As for Theorem~\ref{t:main-B}, we obtain the following extension of Theorem~\ref{t:main-C} to cover the class of NUCA with uniformly bounded singularity. 
\begin{theoremletter}
\label{t:main-D} 
 Let $G$ be a finitely generated sofic group and let $A$ be a finite alphabet.  Let $M\subset G$ be finite and let $S=A^{A^M}$. Suppose that $\sigma_s \colon A^G \to A^G$ is stably post-surjective for some $s \in S^G$ with uniformly bounded singularity. Then $\sigma_s$ is stably invertible. 
\end{theoremletter}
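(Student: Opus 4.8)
The plan is to reduce the statement to ordinary invertibility of $\sigma_s$ and then establish the latter by a dual-surjunctivity counting argument over a sofic approximation, using the uniformly bounded singularity hypothesis exactly where the asymptotically constant hypothesis enters the proof of Theorem~\ref{t:main-C}; in other words, I would treat $\sigma_s$ in the post-surjective setting the way Theorem~\ref{t:main-B} treats the injective setting of Theorem~\ref{t:main-A}. Since $G$ is finitely generated and sofic it is countable, and $M,A$ are finite, so Lemma~\ref{l:equivalent-stable-invertible} reduces the problem to showing that $\sigma_s$ itself is invertible, i.e.\ bijective with a finite-memory inverse; stable invertibility then follows automatically. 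Accordingly I will aim to show that $\sigma_s$ is post-surjective (given) and \emph{pre-injective}, and then invoke the reversibility mechanism for finite-memory NUCA, the NUCA analogue of Capobianco--Kari--Taati \cite{kari-post-surjective} (cf.\ \cite{phung-post-surjective}), which manufactures a finite-memory two-sided inverse out of a post-surjective and pre-injective map.

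Surjectivity itself is the soft direction and I would dispose of it first: post-surjectivity forces the image $\sigma_s(A^G)$ to be saturated under asymptotic equivalence, and since every asymptotic class is dense in the prodiscrete topology while $\sigma_s(A^G)$ is closed by the closed image property \cite[Theorem~4.4]{phung-tcs}, a nonempty closed asymptotically saturated set must be all of $A^G$. The substance is therefore pre-injectivity, which I would prove by contradiction on a sofic approximation $(V_n,\phi_n)$ of $G$. Stable post-surjectivity supplies a single finite \emph{retraction radius} $\Delta\subset G$ valid simultaneously for all $\sigma_p$, $p\in\Sigma(s)$: any modification of an output on a finite set lifts to a modification of an input supported in the $\Delta$-neighbourhood of that set. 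On the finite model this retraction shows that the approximate image of $\sigma_s$ is exponentially large in $\lvert V_n\rvert$. If, on the other hand, $\sigma_s$ were not pre-injective, there would be two asymptotic configurations collapsing to the same image and differing on a finite set $D$; replicating this collapse at enough essentially independent locations of $V_n$ would force the approximate image to be exponentially \emph{smaller} than $\lvert A\rvert^{\lvert V_n\rvert}$, contradicting the lower bound for large $n$.

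The point where uniformly bounded singularity is indispensable—and the main obstacle—is the replication step, which in the CA case (and in the asymptotically constant case of Theorem~\ref{t:main-C}) rests on translation invariance of the local rule away from a single finite set. For a general $s$ the non-uniformity of the local rules obstructs both the coherent assembly of the finite model on $V_n$ and the transplantation of the collapse to other cells. Here I would use the defining property: given the radius $E$ determined by $M$ and $\Delta$, choose finite regions $F$ with $s\vert_{FE\setminus F}$ constant and align the blocks of the sofic approximation with these constant shells, so that across every shell $\sigma_s$ coincides with the honest CA $\sigma_c$. The shells then act as firewalls that (a) make the block-by-block liftings coming from $\Delta$ mutually consistent and (b) import enough of the translation invariance of $\sigma_c$ to run the replication/counting dichotomy in the non-uniform setting.

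I expect this shell-versus-approximation bookkeeping, rather than the counting inequality itself, to be the principal technical hurdle, exactly as in the passage from Theorem~\ref{t:main-A} to Theorem~\ref{t:main-B}: one must verify that the constant-shell regions furnished by the hypothesis can be selected compatibly with the approximate homomorphisms $\phi_n$, that the aggregate of shell boundaries contributes only a negligible correction to the counts, and that the local inverses defined on the separated blocks glue into one finite-memory map. Once pre-injectivity is secured this way, combining it with post-surjectivity produces a finite-memory inverse for $\sigma_s$, whence $\sigma_s$ is invertible and Lemma~\ref{l:equivalent-stable-invertible} upgrades this to stable invertibility, completing the proof.
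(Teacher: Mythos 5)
Your proposal follows essentially the same route as the paper: reduce to showing pre-injectivity (stable post-surjectivity plus pre-injectivity yields a finite-memory inverse via \cite[Theorem~13.4]{phung-tcs}, and Lemma~\ref{l:equivalent-stable-invertible} upgrades invertibility to stable invertibility), then rule out a pre-injectivity failure by the same replication/counting dichotomy on a sofic approximation, with the constant shells from uniformly bounded singularity playing exactly the firewall role you describe. The only cosmetic difference is that the paper packages the shell bookkeeping by replacing $s$ with a single asymptotically constant truncation $t$ that agrees with $s$ on a large ball containing the pre-injectivity defect, the retraction set, and the singular core, and then re-runs the proof of Theorem~\ref{t:main-C} verbatim for $t$.
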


\subsubsection{Conjectures}
Our main results naturally motivate the following surjunctivity conjectures for NUCA with uniformly bounded singularity which generalizes the surjunctivity and the dual surjunctivity conjectures for CA. 

\begin{conjecture}[Stable surjunctivity] 
\label{conjecture-stable-surj} Let $G$ be a finitely generated group and let $A$ be a finite alphabet.  Let $M\subset G$ be finite and let $S=A^{A^M}$. Suppose that $\sigma_s \colon A^G \to A^G$ is stably injective for some $s \in S^G$ with uniformly bounded singularity. Then $\sigma_s$ is stably invertible. 
\end{conjecture}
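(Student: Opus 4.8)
The plan is to split on the universe $G$, since Conjecture~\ref{conjecture-stable-surj} is word-for-word Theorem~\ref{t:main-B} with the soficity hypothesis suppressed. If $G$ is sofic, there is nothing to do: stable injectivity of $\sigma_s$ together with uniformly bounded singularity of $s$ gives stable invertibility by Theorem~\ref{t:main-B}. All of the content therefore lies in the case where $G$ is \emph{not} sofic, and in that regime one must find a replacement for the sofic approximation arguments that drive Theorems~\ref{t:main-A}--\ref{t:main-D}.

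The hard part is that the non-sofic case is genuinely open. Taking $s$ constant, so that $\Sigma(s)=\{s\}$ and stable injectivity collapses to injectivity of a single CA while stable invertibility forces bijectivity, Conjecture~\ref{conjecture-stable-surj} already contains Gottschalk's surjunctivity conjecture: an injective CA on a finite alphabet over an arbitrary finitely generated group would have to be surjective. As this is unknown outside the sofic class---and as it is not even known whether any non-sofic group exists---an unconditional proof would resolve a long-standing open problem. I therefore do not expect a complete argument; the realistic objective is to enlarge the family of universes for which the conclusion holds.

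Concretely, I would pursue three partial routes, expecting each to stall at the same point. First, I would try to widen the approximation scheme: the sofic proof converts injectivity into surjectivity by transporting large patterns onto finite models and counting, so any group class admitting a comparable finitary approximation would inherit the theorem verbatim. Second, in the linear case I would route the problem through ring theory, where stable invertibility of $\sigma_s$ translates into invertibility inside a twisted group ring; any extension of Kaplansky's stable finiteness conjecture past the sofic barrier would then feed back into Conjecture~\ref{conjecture-stable-surj}. Third, I would lean harder on uniformly bounded singularity, using the finite sets $F$ it supplies to localize the dynamics and invoking Lemma~\ref{l:equivalent-stable-invertible} to reduce stable invertibility to plain invertibility of $\sigma_s$. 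In every route the obstruction is identical: away from sofic groups there is no known finitary handle on $G$, and hence no substitute for the counting step that turns injectivity into surjectivity. That missing ingredient is precisely why the statement appears here as a conjecture rather than a theorem.
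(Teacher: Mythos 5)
This statement is a conjecture, and the paper offers no proof of it in general: it only records (immediately after the statement) that the conjecture holds for finitely generated sofic groups by Theorem~\ref{t:main-B}. Your assessment matches the paper exactly --- the sofic case follows from Theorem~\ref{t:main-B}, and the general case contains Gottschalk's surjunctivity conjecture (take $s$ constant) and is therefore open, so no complete argument is expected or given.
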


\begin{conjecture}[Stable dual-surjunctivity] Let $G$ be a finitely generated group and let $A$ be a finite alphabet.  Let $M\subset G$ be finite and let $S=A^{A^M}$. Suppose that $\sigma_s \colon A^G \to A^G$ is stably post-surjective for some $s \in S^G$ with uniformly bounded singularity. Then $\sigma_s$ is stably  invertible. 
\end{conjecture}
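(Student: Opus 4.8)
The plan is to reduce the asserted conclusion to the mere \emph{invertibility} of $\sigma_s$ and then invoke Lemma~\ref{l:equivalent-stable-invertible}: since $G$ is finitely generated and hence countable, and $M$, $A$ are finite, it suffices to produce a single bijective $\sigma_s$ with finite-memory inverse, after which stable invertibility is automatic. So the whole task is to promote stable post-surjectivity, under the uniformly bounded singularity hypothesis, to invertibility. Mirroring the architecture of the proof of Theorem~\ref{t:main-C}, I would split this into two implications: first, stable post-surjectivity $\Rightarrow$ pre-injectivity (the sofic dual-surjunctivity step), and second, post-surjectivity together with pre-injectivity $\Rightarrow$ the existence of a finite-memory two-sided inverse (the reversibility step).

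For the first implication I would run the sofic counting argument of Capobianco--Kari--Taati \cite{kari-post-surjective}, adapted to NUCA. Fixing a sofic approximation $\Phi_i\colon G\to\Sym(V_i)$, post-surjectivity of the local rules seen along the approximation forces the induced finite maps on $A^{V_i}$ to be surjective up to a sublinear error, and a pigeonhole estimate then excludes a nontrivial pair of asymptotic configurations with equal image, yielding pre-injectivity. The genuinely new point relative to Theorem~\ref{t:main-C} is that the singular set of $s$ is now \emph{infinite}, so it can no longer be absorbed as an $O(1)$ perturbation of a genuine CA. This is exactly where uniformly bounded singularity is used: for every radius it supplies a finite $F$ whose outer collar $FE\setminus F$ carries a constant rule, and these insulating collars let me partition each $V_i$ into buffered blocks on which the finite local map agrees with that of the constant background CA, so that the errors introduced by the singular cells are confined to the collars and stay sublinear in $|V_i|$. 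I expect this bookkeeping --- keeping the aggregate contribution of the infinite singular set negligible on the sofic models, while still invoking stable post-surjectivity uniformly over $\Sigma(s)$ (whose elements are precisely the limits of translates $gs$ recorded along $\Phi_i$) --- to be the main obstacle.

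Two observations feed the endgame. Since $S=A^{A^M}$ is finite and the constant collars can be centered so as to carry arbitrarily large constant balls, a compactness argument produces a constant configuration $c^\ast\in\Sigma(s)$; stable post-surjectivity then makes the genuine CA $\sigma_{c^\ast}$ post-surjective, so by \cite{kari-post-surjective} it is reversible, fixing a finite inverse memory $N$ and a background inverse rule. For the second implication I would combine the pre-injectivity just obtained with post-surjectivity and the closed-image property of finite-memory NUCA \cite[Theorem~4.4]{phung-tcs} to force $\sigma_s$ bijective, and then build the inverse rule on each buffered block from $\sigma_{c^\ast}^{-1}$, corrected only on the finitely many singular cells that block contains; the uniform bound $N$ guarantees that these local inverses agree on overlaps and glue into a genuine finite-memory NUCA $\sigma_t$ with $\sigma_s\circ\sigma_t=\sigma_t\circ\sigma_s=\Id$. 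Propagating one-sided inverses through the orbit closure by \cite[Theorem~11.1]{phung-tcs}, exactly as in the proof of Lemma~\ref{l:equivalent-stable-invertible}, confirms that $\sigma_s$ is invertible, and a final appeal to Lemma~\ref{l:equivalent-stable-invertible} upgrades this to the stable invertibility claimed in the theorem.
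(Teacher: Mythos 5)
First, a point about scope: the statement you are proving is stated as a \emph{conjecture} for arbitrary finitely generated groups, and the paper does not prove it in that generality; it proves the sofic case (Theorem~\ref{t:main-D}). Your argument also silently assumes soficity the moment you fix a sofic approximation, so you are at best addressing the sofic case, which is the right comparison point.

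The genuine gap is in your pre-injectivity step. You propose to run the Capobianco--Kari--Taati counting directly on $s$ by partitioning each sofic model ``into buffered blocks on which the finite local map agrees with that of the constant background CA,'' with the singular cells ``confined to the collars'' and contributing sublinearly. This misreads what uniformly bounded singularity provides: for each finite $E$ it gives \emph{one} finite $F\supset E$ such that $s$ is constant on the single annulus $FE\setminus F$, while $s$ may be non-constant on all of $F$ and on all of $G\setminus FE$. The singular set can therefore have full density in $G$ (hence in the sofic models); it is the constant collars that are sparse, not the singularities, so the partition and the sublinear-error bookkeeping you defer as ``the main obstacle'' cannot be carried out as described. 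There is a second, related problem: the counting argument requires planting the defect pair $p\vert_K\neq q\vert_K$ with equal image at linearly many disjoint vertices, which forces the \emph{same} local rule $s\vert_{B_\Delta(R)}$ to be assigned near each of them; reading $s$ off along the sofic approximation supplies that rule at only one location. The paper's actual use of the hypothesis is essentially the reverse of yours: it chooses one constant annulus around a ball $B_\Delta(r)$ containing $K$ and the relevant singular cells, truncates $s$ to the asymptotically constant $t$ with $t\vert_{B_\Delta(r)}=s\vert_{B_\Delta(r)}$ and $t=c$ outside (so $t$ still witnesses the failure of pre-injectivity), and then applies the already-established local-perturbation argument of Theorem~\ref{t:post-sur-implies-pre-inj-first} verbatim to $t$. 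Your endgame is fine in outline --- pre-injectivity plus stable post-surjectivity yields invertibility, and Lemma~\ref{l:equivalent-stable-invertible} upgrades this to stable invertibility --- but the paper simply cites \cite[Theorem~13.4]{phung-tcs} here, whereas your proposed gluing of local inverses of $\sigma_{c^\ast}$ corrected on singular cells (with the unproved claim that they ``agree on overlaps'') is itself a nontrivial assertion you would have to justify. Your compactness observation that $\Sigma(s)$ contains a constant configuration is correct but becomes unnecessary once the truncation device is in place.
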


By Theorem~\ref{t:main-B} and Theorem~\ref{t:main-D}, the above conjectures hold true for finitely generated sofic groups. We observe that by results in \cite{phung-laa} and Lemma~\ref{l:equivalent-stable-invertible}, the stable surjunctivity conjecture and the stable dual-sujunctivity conjecture are in  fact equivalent when restricted to the class of linear NUCA. 

\begin{proposition}
\label{pro:intro}
Let $G$ be a finitely generated group and let $A$ be a finite vector space alphabet. 
The stable surjunctivity conjecture holds true for all linear $\mathrm{NUCA}$  $A^G\to A^G$  of finite memory with uniformly bounded singularity if and only if so does the stable dual-surjunctivity conjecture. 
\end{proposition}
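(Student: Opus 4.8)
The plan is to play off the two conjectures against each other through the transpose (adjoint) construction for linear NUCA and the duality it induces between injectivity and post-surjectivity, closing the argument with Lemma~\ref{l:equivalent-stable-invertible}. Write $V=A$ for the finite vector-space alphabet and $V^\ast=\operatorname{Hom}_\K(V,\K)$ for its dual, which is again a finite vector space of the same dimension; in particular the class of linear NUCA of finite memory with uniformly bounded singularity over $G$ is stable under passing to duals. A linear NUCA $\sigma_s\colon V^G\to V^G$ of memory $M$ is given by local defining maps $s(g)=(s(g)_m)_{m\in M}$ with each $s(g)_m\in\End_\K(V)$, and I would attach to it the transpose NUCA $\sigma_{s^\ast}\colon (V^\ast)^G\to (V^\ast)^G$ of memory $M^{-1}$ whose local defining maps are the transposed endomorphisms, arranged so that $\langle \sigma_{s^\ast}(\phi),x\rangle=\langle \phi,\sigma_s(x)\rangle$ holds whenever $\phi\in (V^\ast)^G$ and $x\in V^G$ with one of them finitely supported. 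Two points are then immediate: the assignment $s\mapsto s^\ast$ is $G$-equivariant, continuous, and involutive (using $V^{\ast\ast}\cong V$), so it restricts to a $G$-equivariant homeomorphism $\Sigma(s)\xrightarrow{\sim}\Sigma(s^\ast)$, $p\mapsto p^\ast$; and $s$ has uniformly bounded singularity if and only if $s^\ast$ does, because $s^\ast(h)$ depends only on the finitely many values $\{s(g):g\in hM^{-1}\}$, so the non-constancy region of $s^\ast$ lies in a bounded neighborhood of that of $s$ and the collar condition transfers after enlarging the test set $E$ by $M\cup M^{-1}$.

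Next I would invoke the pointwise duality for finite-memory linear NUCA from \cite{phung-laa}: for every configuration $p$, the map $\sigma_p$ is post-surjective if and only if $\sigma_{p^\ast}$ is injective, and (by the involution) $\sigma_p$ is injective if and only if $\sigma_{p^\ast}$ is post-surjective; moreover $\sigma_p$ is invertible if and only if $\sigma_{p^\ast}$ is. The linear content is standard: for a finite-memory linear NUCA, post-surjectivity is equivalent to surjectivity of the restriction $\sigma_p\colon\bigoplus_G V\to\bigoplus_G V$ to finitely supported configurations, and taking algebraic transposes turns this restricted surjectivity into injectivity of $\sigma_{p^\ast}$ on $(V^\ast)^G=\prod_G V^\ast$; the invertibility statement follows by transposing the defining relations $\sigma_p\circ\sigma_q=\sigma_q\circ\sigma_p=\Id$ and noting that transposition reverses composition. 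Combined with the homeomorphism $\Sigma(s)\cong\Sigma(s^\ast)$, this yields the stable versions: $\sigma_s$ is stably post-surjective if and only if $\sigma_{s^\ast}$ is stably injective, and symmetrically for the pair injective/post-surjective.

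With these tools in hand the equivalence is formal. Suppose first that the stable surjunctivity conjecture holds for all linear NUCA of finite memory with uniformly bounded singularity, and let $\sigma_s$ be stably post-surjective with $s$ of uniformly bounded singularity. Then $s^\ast$ has uniformly bounded singularity and $\sigma_{s^\ast}$ is stably injective, so by hypothesis $\sigma_{s^\ast}$ is stably invertible; in particular $\sigma_{s^\ast}$ is invertible (take $p=s^\ast\in\Sigma(s^\ast)$), whence $\sigma_s$ is invertible by the transpose duality, and finally $\sigma_s$ is stably invertible by Lemma~\ref{l:equivalent-stable-invertible}, as $G$ is finitely generated hence countable and $A$ is finite. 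This gives the stable dual-surjunctivity conjecture. The reverse implication is entirely symmetric: one starts from a stably injective $\sigma_s$, passes to the stably post-surjective $\sigma_{s^\ast}$, applies the assumed stable dual-surjunctivity to get $\sigma_{s^\ast}$ stably invertible, transports invertibility back through the dual, and concludes with one last appeal to Lemma~\ref{l:equivalent-stable-invertible}.

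The step I expect to be most delicate is not the formal reduction but the verifications underlying the cited duality—chiefly that post-surjectivity of a finite-memory linear NUCA is exactly surjectivity on finitely supported configurations, and that the algebraic transpose exchanges this with injectivity on the full dual configuration space—together with the bookkeeping that $s\mapsto s^\ast$ carries $\Sigma(s)$ onto $\Sigma(s^\ast)$ and preserves uniformly bounded singularity. Once these are secured, nothing deep remains and the proposition reduces to a clean duality-plus-Lemma~\ref{l:equivalent-stable-invertible} argument.
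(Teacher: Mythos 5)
Your proposal is correct and follows essentially the same route as the paper: both arguments pass to the dual linear NUCA $\sigma_{s^\ast}$ of \cite{phung-laa}, use the equivalences (invertible $\leftrightarrow$ invertible, stably injective $\leftrightarrow$ stably post-surjective), note that uniformly bounded singularity is preserved under $s\mapsto s^\ast$, and close with Lemma~\ref{l:equivalent-stable-invertible}. The only difference is that you sketch the verification of the duality properties (adjointness, the homeomorphism $\Sigma(s)\cong\Sigma(s^\ast)$, post-surjectivity as surjectivity on finitely supported configurations), which the paper simply cites from \cite{phung-laa}.
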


\begin{proof}
Let $M\subset G$ be a finite subset and let $s\in S^G$ where $S=A^{A^M}$. We identify the vector space $A$ with its dual vector space $A^*$. Then we infer from \cite{phung-laa} that $\sigma_s$ admits some dual NUCA $\sigma_{s^*}\colon A^G \to A^G$ with finite memory where  $s^*\in S^G$ such that 
\begin{enumerate}[\rm (i)]
    \item $\sigma_s$ is invertible$\iff$$\sigma_{s^*}$ is invertible,
    \item 
    $\sigma_s$ is stably injective$\iff$$\sigma_{s^*}$ is stably post surjective,
    \item 
     $\sigma_s$ is stably post-surjective$\iff$$\sigma_{s^*}$ is stably injective. 
\end{enumerate}
 Moreover, we infer from Lemma~\ref{l:equivalent-stable-invertible} that every invertible NUCA with finite memory over a finite alphabet and a countable group universe is automatically stably invertible.   
 Finally, it is straightforward  from the definition of $s^*$ that $s$ has uniformly bounded singularity if and only if so does $s^*$. The proof is thus complete.  
\end{proof}

\subsection{Applications} 

We will present below several algebraic and geometric  applications of our main results.

\subsubsection{Direct finiteness of algebraic NUCA} 
Our first application is the following  geometric result which generalizes the direct finiteness property of algebraic CA over sofic groups   \cite{phung-geometric} to the class of algebraic NUCA (see e.g. \cite{gromov-esav}, \cite{cscp-comalg}, \cite{cscp-alg-goe}, \cite{cscp-invariant}, \cite{phung-embedding}) with uniformly bounded singularity over sofic groups. In this paper, an algebraic variety is a reduced separated scheme of finite type over an algebraically closed field and we  identify such a variety with its closed points. 

\begin{theoremletter}
\label{t:main-E} 
Let $G$ be a finitely generated sofic group and let $X$ be an algebraic variety. Let $M\subset G$ be finite and let $S$ be the set of morphisms of algebraic varieties $X^M \to X$. Suppose that $s, t\in S^G$ have uniformly bounded singularity and  $\sigma_s \circ \sigma_t=\Id_{X^G}$. Then $\sigma_t \circ \sigma_s=\Id_{X^G}$. 
\end{theoremletter}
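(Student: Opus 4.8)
The plan is to reduce the bidirectional identity to a one-sided surjunctivity statement and then prove that statement by an algebraic-geometric version of the Gromov--Weiss counting argument, with the dimension of a variety playing the role that cardinality (or entropy) plays in the finite-alphabet setting of Theorem~\ref{t:main-B}.

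First I would extract the easy algebraic consequences of the hypothesis. From $\sigma_s \circ \sigma_t = \Id_{X^G}$ it is immediate that $\sigma_t$ is injective and $\sigma_s$ is surjective, so the relation is already one of a left inverse. The key reduction is that it suffices to prove $\sigma_t$ is \emph{surjective}: once $\sigma_t$ is bijective, the identity $\sigma_s \circ \sigma_t = \Id$ forces $\sigma_s = \sigma_t^{-1}$, and hence $\sigma_t \circ \sigma_s = \Id_{X^G}$. Concretely, given $x \in X^G$ write $x = \sigma_t(y)$ by surjectivity; then $\sigma_s(x) = \sigma_s(\sigma_t(y)) = y$, so $\sigma_t(\sigma_s(x)) = \sigma_t(y) = x$. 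Thus the entire content becomes the algebraic surjunctivity assertion: an \emph{injective} algebraic NUCA with uniformly bounded singularity over a finitely generated sofic group is surjective, i.e.\ the algebraic analogue of the surjectivity half of Theorem~\ref{t:main-B}.

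To prove this surjunctivity I would argue by contradiction using a dimension count over sofic approximations. Suppose $\sigma_t$ is not surjective. Since each finite-window restriction of $\sigma_t$ is a morphism of finite-type varieties (it involves only finitely many cells and hence finitely many local maps), Chevalley's theorem makes the image patterns constructible; combined with $G$-equivariance and the closed image property for NUCA of finite memory, the closure $\overline{\im \sigma_t}$ is then a proper $G$-invariant closed subshift, which yields a finite $\Omega \subset G$ and a genuine codimension $\delta > 0$, namely $\dim \pi_\Omega(\overline{\im \sigma_t}) \le |\Omega|\dim X - \delta$. Next, fixing a large finite $K \subset G$ containing $M$, $\Omega$ and a uniform window furnished by the uniformly bounded singularity hypothesis, soficity provides finite sets $D_n$ together with a subset $D_n^\circ$ of $(1-\varepsilon_n)|D_n|$ ``good'' cells whose $K$-neighbourhoods match those in $G$; the bounded-singularity condition is used precisely to arrange that, after transport to $D_n$, the local rule is the constant $c$ outside a set of density tending to $0$, so that $\sigma_t$ is modelled by an honest morphism of varieties $\tau_n \colon X^{D_n} \to X^{D_n^\circ}$. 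Global injectivity of $\sigma_t$, together with the uniform-annulus gluing, forces $\tau_n$ to have finite generic fibres, giving the lower bound $\dim \overline{\im \tau_n} \ge (1-\varepsilon_n)|D_n|\dim X - o(|D_n|)$. On the other hand, packing $\sim |D_n|/|\Omega|$ disjoint translates of the forbidden codimension-$\delta$ pattern into $D_n^\circ$ gives the upper bound $\dim \overline{\im \tau_n} \le |D_n|\dim X - \tfrac{\delta}{|\Omega|}(1-\varepsilon_n)|D_n| + o(|D_n|)$. For $\varepsilon_n \to 0$ these two estimates are incompatible, so $\sigma_t$ must be surjective, completing the argument.

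The main obstacle I expect is Step~3, specifically the simultaneous control of two delicate things: passing from the \emph{global} injectivity of $\sigma_t$ over $X^G$ to the \emph{generic finiteness} of the finite-dimensional maps $\tau_n$ (the dimension lower bound), and ensuring that the singular cells of $t$ contribute only $o(|D_n|)$ to the dimension bookkeeping. The latter is exactly where uniformly bounded singularity is indispensable, since this hypothesis cannot be replaced by finiteness of the set of local rules; indeed, configurations such as one that is constant except on a sparse set like $\{10^k\}$ in $G = \Z$ have uniformly bounded singularity yet take infinitely many values, which is why a naive spreading-out over a finite-type ring followed by reduction to finite fields and a direct appeal to Theorem~\ref{t:main-B} does not obviously apply, and the intrinsic dimension-counting argument above is the more robust route.
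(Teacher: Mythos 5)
Your opening reduction is correct and matches the paper in spirit: since $\sigma_s \circ \sigma_t = \Id$ makes $\sigma_t$ injective (indeed left-invertible by a finite-memory NUCA, hence stably injective) and $\sigma_s$ surjective, it suffices to show $\sigma_t$ is surjective. The gap is in the dimension-counting argument you propose for that surjectivity. In this paper an algebraic variety is a reduced separated scheme of finite type over an algebraically closed field, so $X$ may be reducible or even finite (zero-dimensional). Your key step --- extracting a window $\Omega$ and a genuine codimension $\delta>0$ from the properness of the closed invariant image --- fails precisely there: if $X$ is finite, every nonempty subset of $X^{\Omega}$ has dimension $0$; if $X$ is reducible, $X^{\Omega}$ has many top-dimensional irreducible components, so a proper closed subset can still have full dimension $|\Omega|\dim X$. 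Thus $\delta=0$ is possible and your two estimates on $\dim \overline{\im\,\tau_n}$ become compatible, so no contradiction arises. The zero-dimensional case is not a removable corner case: it \emph{is} the finite-alphabet case, i.e.\ Theorem~\ref{t:main-B}, whose proof requires cardinality counting rather than dimension counting. Even for irreducible $X$ of positive dimension, the passage from global injectivity of $\sigma_t$ on $X^G$ to generic finiteness of the transported maps $\tau_n$ on the sofic approximation graphs (which are not subsets of $G$) is the genuinely hard point, and your sketch does not supply it.

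The paper argues in the opposite direction: it first proves the finite-alphabet statement (Theorem~\ref{t:main-B}) and then reduces the algebraic statement to it, following the reduction scheme of \cite[Theorem~C]{phung-geometric}. Your stated reason for rejecting that route --- that a configuration with uniformly bounded singularity may involve infinitely many distinct local rules, so spreading out over a ring of finite type is problematic --- identifies a real obstruction, but it is exactly what Lemma~\ref{l:main-lemma-singular} removes: for any finite window $E$ it replaces $(s,t)$ by \emph{asymptotically constant} configurations $(p,q)$ with $p\vert_E=s\vert_E$, $q\vert_E=t\vert_E$ and $\sigma_q\circ\sigma_p=\Id$. Asymptotically constant configurations use only finitely many local rules, and the desired identity $\sigma_t\circ\sigma_s=\Id$ can be verified window by window, so the reduction to Theorem~\ref{t:main-B} does go through once that lemma is available. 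If you want to salvage a direct geometric argument, you would at minimum have to handle reducible and zero-dimensional alphabets by a separate counting mechanism, at which point you are essentially reproving Theorem~\ref{t:main-B}.
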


\begin{proof}
By using Lemma~\ref{l:main-lemma-singular}, the proof follows closely, \emph{mutatis mutandis}, the proof of \cite[Theorem~C]{phung-geometric} which reduces the surjunctivity of $\sigma_s$ to the surjunctivity of \emph{all} NUCA $\sigma_q\colon A^G \to A^G$ where $A$ is a finite alphabet,  $N\subset G$ is a finite subset, $Q=A^{A^N}$, and $q \in Q^G$ is a configuration with uniformly bounded singularity. 
Consequently,  Theorem~\ref{t:main-E} from  Theorem~\ref{t:main-B} by the above reduction. The proof is thus complete. 
\end{proof}

We remark that the same proof above actually shows that Theorem~\ref{t:main-E} still holds true when we replace the group $G$ by any stable surjunctive group, namely, a group which satisfies  Conjecture~\ref{conjecture-stable-surj} on the stable surjunctivity.

\subsubsection{Stable finiteness of twisted group rings}
Our next application contributes to 
Kaplansky's stable finiteness conjecture \cite{kap} which states that for every group $G$ and every field $k$, the group ring $k[G]$ is stably finite, that is, 
every one-sided invertible element of the ring $M_n(k[G])$ of square matrices of size $n \times n$  with coefficients in $k[G]$ must be a two-sided unit. 
 Kaplansky's stable finiteness holds true for all sofic groups  \cite{ara}, \cite{elek}, \cite{csc-sofic-linear} and  when $k=\C$ regardless of the group $G$ \cite{kap}, \cite{juschenko}. 

\par 
Given a group $G$ and a vector space $V$ over a field $k$, the $k$-algebra   $\mathrm{LNUCA}_{c}(G,V)$ consists of linear NUCA $V^G \to V^G$ of finite memory which are local perturbations of a linear CA.  
The multiplication in  $\mathrm{LNUCA}_{c}(G, V)$ is induced by the composition of maps and the addition is component-wise. 
\par 
 By results in \cite{csc-sofic-linear}, there exists an isomorphism  $M_n(k[G])\simeq \mathrm{LCA}(G, k^n)$ of $k$-algebras where $\mathrm{LCA}(G, k^n) \subset \mathrm{LNUCA}_{c}(G, k^n)$ is the subalgebra consisting of linear CA. When $G$ is infinite,  we can extend the above isomorphism to an isomorphism  
 $M_n(D^1(k[G])) \simeq \mathrm{LNUCA}_{c}(G, k^n)$ (see \cite{phung-cjm}) where   $$D^1(k[G]) =   k[G] \times (k[G])[G]$$ is the twisted group ring with component-wise addition and with multiplication given by
$ 
(\alpha_1, \beta_1) * (\alpha_2, \beta_2) = (\alpha_1 \alpha_2, \alpha_1 \beta_2 + \beta_1 \alpha_2 + \beta_1 \beta_2)$. Here,  $\alpha_1 \alpha_2$ is computed with the multiplication rule in the group ring $k[G]$ so that $k[G]$ is naturally a subring of $D^1(k[G])$ via the map $\alpha \mapsto (\alpha, 0)$. However,  $\alpha_1 \beta_2$,  $\beta_1 \alpha_2$, $\beta_1 \beta_2$ are {twisted products} and defined for $g, h\in G$ by $
    (\alpha \beta)(g)(h) = \sum_{t \in G} \alpha(t) \beta(gt)(t^{-1}h)$, $
    (\beta \alpha)(g)(h)= \sum_{t \in G}  \beta(g)(t) \alpha(t^{-1}h)$, $
    (\beta\gamma)(g)(h)= \sum_{t \in G} \beta(g)(t) \gamma(gt)(t^{-1}h)$. 
Twisted products  are  different from the multiplication rule of the group ring $(k[G])[G]$  with coefficients in $k[G]$.  

\par 
It was shown  that all sofic groups are {$L$-surjunctive} \cite{gromov-esav}, \cite{csc-sofic-linear}, that is, for every finite-dimensional vector space $V$,   every injective $\tau  \in \mathrm{LCA}(G,V)$ is also surjective.  Moreover,  a group $G$ is $L$-surjunctive if and only if $k[G]$ is stably finite for every field $k$ \cite{csc-sofic-linear}.  
In general, we say that a group  $G$ is {$L^1$-surjunctive}, resp.  {dual $L^1$-surjunctive}, if for every finite-dimensional vector space $V$ and $\tau  \in \mathrm{LNUCA}_c(G,V)$ that is stably  injective, resp. stably post-surjective, then $\tau$ must be surjective, resp. pre-injective.  
\cite[Theorem~C]{phung-cjm} shows that  initially subamenable groups  and  residually finite groups are both $L^1$-surjunctive and dual $L^1$-surjunctive. As an application of Theorem~\ref{t:main-A}, we obtain a  generalization of \cite[Theorem~C]{phung-cjm} which also confirms the stable finiteness of twisted  group rings over sofic groups. 
    
\begin{theoremletter}
\label{t:main-F} 
    Let  $G$ be a finitely generated sofic group. Then we have  
\begin{enumerate}[\rm (i)]
\item 
   $G$ is $L^1$-surjunctive,
\item 
   $G$ is dual $L^1$-surjunctive, 
\item $D^1(k[G])$ is stably finite for every field $k$. 
\end{enumerate}
\end{theoremletter}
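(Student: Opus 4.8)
The plan is to establish (i) first and then obtain (ii) and (iii) from it, reducing the bulk of the work to Theorem~\ref{t:main-A}. Throughout one may assume $G$ is infinite, since for finite $G$ the full shift is a finite set, $D^1(k[G])$ is a finite-dimensional $k$-algebra, and all three assertions are immediate. Granting (i), I would deduce (ii) by linear duality: passing to the dual NUCA $\sigma_{s^*}$ as in the proof of Proposition~\ref{pro:intro} (following \cite{phung-laa}) turns a stably post-surjective $\sigma_s$ into a stably injective $\sigma_{s^*}$ of finite memory, still a local perturbation of a CA; applying (i) makes $\sigma_{s^*}$ surjective, and dualizing back yields pre-injectivity of $\sigma_s$, which is exactly dual $L^1$-surjunctivity. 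Alternatively (ii) follows directly from Theorem~\ref{t:main-C} by the same specialization used for (i) below.

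For (iii) I would argue purely algebraically from (i). It suffices to prove direct finiteness of $M_n(D^1(k[G]))$ for every $n$, so suppose $ab=1$ there. Under the isomorphism $M_n(D^1(k[G]))\simeq \mathrm{LNUCA}_{c}(G,k^n)$ this reads $\sigma_a\circ\sigma_b=\Id$, whence $\sigma_b$ is injective. Because $\sigma_b$ is a local perturbation of a CA, its defining configuration $b$ is asymptotic to a constant $c$, so $\Sigma(b)=\{gb:g\in G\}\cup\{c\}$: each $\sigma_{gb}$ is conjugate to $\sigma_b$ and hence injective, while taking the prodiscrete limit of the shifted identities $\sigma_{ga}\circ\sigma_{gb}=\Id$ as $g\to\infty$ gives $\sigma_{c'}\circ\sigma_{c}=\Id$ for the underlying CA (as in the proof of Lemma~\ref{l:equivalent-stable-invertible}), so $\sigma_c$ is injective too. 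Thus $\sigma_b$ is stably injective, and (i) makes it surjective, hence bijective; then $\sigma_a=\sigma_a\circ\sigma_b\circ\sigma_b^{-1}=\sigma_b^{-1}$ and $ba=1$. Running this for all $n$ gives stable finiteness of $D^1(k[G])$.

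It remains to prove (i). Over a finite field $k=\F_q$ the alphabet $V=\F_q^{\,n}$ is finite, and a stably injective local perturbation $\sigma_s$ is stably invertible by Theorem~\ref{t:main-A}, in particular surjective. For a general field I would reduce to this case by spreading out. A given $\sigma_s\in\mathrm{LNUCA}_{c}(G,k^n)$ is governed by only finitely many $k$-linear local maps (the constant CA rule together with the finitely many exceptional cells where $s$ differs from it); let $R\subset k$ be the subring generated by all their matrix entries. Then $R$ is a finitely generated, hence Jacobson, ring whose closed points $\mathfrak m$ have finite residue field $R/\mathfrak m=\F_q$, and $\sigma_s$ is defined over $R$, reducing modulo each $\mathfrak m$ to a local perturbation $\sigma_{s,\mathfrak m}$ of a CA over the finite alphabet $\F_q^{\,n}$, with the same exceptional set.

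The main obstacle is to carry stable injectivity down to a closed point while carrying non-surjectivity down to the same point, so as to contradict the finite-field case. For the downward transfer of injectivity I would encode the relevant separation and non-orphan conditions as finite linear-algebra statements over $R$ — using uniform decoding windows for the injective parts and the closed image property \cite[Theorem~4.4]{phung-tcs} to extract, from a failure of surjectivity over $k$, a single finite orphan pattern — and then observe that each such condition is a rank (in)equality of finite matrices over $R$, hence is preserved on a Zariski-dense open subset of $\Spec R$ by semicontinuity of rank. Intersecting the finitely many resulting dense opens and invoking Jacobson density of finite-residue-field points produces a single $\mathfrak m$ at which $\sigma_{s,\mathfrak m}$ is simultaneously stably injective and non-surjective over $\F_q$, contradicting Theorem~\ref{t:main-A}; hence $\sigma_s$ is surjective. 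Making this descent rigorous — in particular ensuring that full (not merely pre-) injectivity is captured by finite conditions over an infinite field — is the technical heart, and I would model it on the specialization argument of \cite[Theorem~C]{phung-cjm} (cf.\ the finite-field reduction for algebraic CA in \cite{phung-geometric}).
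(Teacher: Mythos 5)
Your first move coincides exactly with the paper's: apply Theorem~\ref{t:main-A} to finite vector space alphabets to conclude that $G$ is \emph{finitely} $L^1$-surjunctive. After that the two arguments part ways in presentation rather than in substance. The paper simply invokes \cite[Theorem~B]{phung-cjm}, which packages precisely the implications you set out to reprove: finitely $L^1$-surjunctive $\Rightarrow$ $L^1$-surjunctive $\Rightarrow$ dual $L^1$-surjunctive and stable finiteness of $D^1(k[G])$. You instead reconstruct that black box: a spreading-out/specialization descent from arbitrary fields to finite residue fields for (i), linear duality via $\sigma_{s^*}$ for (ii), and the isomorphism $M_n(D^1(k[G]))\simeq \mathrm{LNUCA}_c(G,k^n)$ together with the upgrade from injectivity to stable injectivity (correctly argued via $\Sigma(b)=\{gb\colon g\in G\}\cup\{c\}$ and \cite[Theorem~11.1]{phung-tcs}) for (iii). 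What your route buys is self-containedness and a visible mechanism; what the paper's route buys is a two-line proof and no need to re-verify the delicate descent.

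Two caveats on your reconstruction. First, the general-field descent for (i) is the genuine technical content of the cited \cite[Theorem~B]{phung-cjm}, and you leave it as an acknowledged sketch ("the technical heart"); as written it is not a complete proof, though it points at the right model. Second, your fallback for (ii) --- "alternatively (ii) follows directly from Theorem~\ref{t:main-C}" --- does not suffice: Theorem~\ref{t:main-C} concerns finite alphabets only, so it yields dual $L^1$-surjunctivity only for finite vector spaces, whereas the definition quantifies over finite-dimensional vector spaces over arbitrary fields. Your primary duality route through (i) is the one that actually works.
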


\begin{proof}
The theorem is trivial if $G$ is finite so we can suppose that $G$ is infinite. We infer from Theorem~\ref{t:main-A} that for every \emph{finite} vector space $V$, every stably injective $\tau \in \mathrm{LNUCA}_c(G,V)$  must be invertible. Since invertibility implies surjectivity, we deduce that $G$ is   \emph{finitely} $L^1$-surjunctive (see \cite[Definition~1.3]{phung-cjm}). Consequently, we conclude from  \cite[Theorem~B]{phung-cjm} that $G$ is both $L^1$-surjunctive and  dual $L^1$-surjunctive and that $D^1(k[G])$ is stably finite for every field $k$. The proof is thus complete. 
\end{proof}

\subsection{Organization of the paper}  

We concisely collect  the definition and  basic properties of finitely generated sofic groups in  Section~\ref{s:sofic-groups}. We then fix the notations and recall the construction of induced local maps of NUCA in Section~\ref{s:induced-local-map}. The proof of Theorem~\ref{t:main-A} is given in Section~\ref{s:local-gottschalk-sofic} where we need to relate several stable dynamic properties of NUCA (Lemma~\ref{l:reversible-asymp-constant}) and analyze the global-local structures of stably injective NUCA with uniformly bounded singularity in the proof of Theorem~\ref{t:main-A-proof}. Then in Section~\ref{s:dual-surjunctive-sofic}, we give the proof of Theorem~\ref{t:main-C} and Theorem~\ref{t:main-D} which is similar to but less involved than the proof of Theorem~\ref{t:main-A}. Finally, the proof of Theorem~\ref{t:main-B} is given in Section~\ref{s:surjunctivity-sofic-general} which reduces Theorem~\ref{t:main-B} to Theorem~\ref{t:main-A} by a technical lemma (Lemma~\ref{l:main-lemma-singular}).

\section{Sofic groups}
\label{s:sofic-groups}

The important class of sofic groups was introduced by Gromov \cite{gromov-esav}  as a common generalization of residually finite groups and  amenable groups. 
Many conjectures for groups have been established for the sofic ones such as 
Gottschalk's surjunctivity conjecture and Kaplansky's stable finiteness conjecture   \cite{gromov-esav}, \cite{weiss-sgds}, \cite{elek},  \cite{csc-book}, \cite{phung-geometric}, \cite{phung-weakly}. We recall below a useful geometric characterization of finitely generated sofic groups. 
 \par
Given a finite set $\Delta$, a   $\Delta$-labeled graph is a pair $\GG= (V,E)$, 
where $V$ is the set of {vertices}, 
and $E \subset V \times \Delta \times V$ is the set of $\Delta$-labeled edges. 
The length of a path $\rho$ in $\GG$ is denoted by $l(\rho)$.  
For $v, w\in V$, we set   $d_\GG(v,w)= \min \{ l(\rho): \text{$\rho$ is a path from $v$ to $w$}\}$ if  $v$ and $w$  are connected by a path, and 
$d_\GG(v,w)= \infty$ 
otherwise. 
For $v\in V$ and $r \geq 0$, we have a $\Delta$-labeled subgraph $B_\GG(v,r)$ of $\GG$ defined by: 
\[
B_\GG(v,r)= \{w\in V: d_\GG (v,w) \leq r\}. 
\]
\par
Let $(V,E)$ and $(W,F)$ be $\Delta$-labeled graphs. 
A map $\phi \colon V \to W$ is  an $\Delta$-labeled graph homomorphism 
from $(V, E)$ to $(W,F)$ if $(\phi(v),s, \phi(w)) \in F$ for all $(v,s,w)\in E$. 
A bijective $\Delta$-labeled graph homomorphism $\phi \colon V \to W$ is an 
$\Delta$-labeled graph isomorphism if its inverse  $\phi^{-1}\colon W \to V$ 
is an  $\Delta$-labeled graph homomorphism. 
\par
Let $G$ be a finitely generated group and let $\Delta\subset G$ 
be a finite symmetric generating subset, i.e., $\Delta=\Delta^{-1}$. 
The Cayley graph of $G$ with respect to $\Delta$ is the connected $\Delta$-labeled graph $C_\Delta(G) = (V,E)$,  
where $V = G$ and $E=\{(g,s,gs): g\in G \text{ and } s \in \Delta)\}$.  
For $g \in G$ and $r\geq 0$, we denote 
\[
B_\Delta(r)= B_{C_\Delta(G)}(1_G,r)  
\]  
\par 
\noindent
We have the following characterization of sofic groups (\cite[Theorem~7.7.1]{csc-book}).

\begin{theorem}
\label{t:sofic-character}
Let $G$ be a finitely generated group. 
Let $\Delta\subset G$ be a finite symmetric generating subset. 
Then the following are equivalent: 
\begin{enumerate} [\rm (a)]
\item
the group $G$ is sofic;
\item
for all $r, \varepsilon >0$, there exists a finite $\Delta$-labeled graph $\GG=(V,E) $ 
satisfying 
\begin{equation*} 
\vert V(r) \vert \geq (1 - \varepsilon) \vert V \vert,
\end{equation*}
where $V(r)\subset V$ consists of $v \in V$ such that there exists a unique $S$-labeled graph
isomorphism $\psi_{v,r} \colon  B_\GG(v,r)\to B_\Delta(r)$ with $\psi_{v,r}(v) = 1_G$. 
\end{enumerate}
\end{theorem}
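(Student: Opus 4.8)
The plan is to prove the equivalence by passing between the two standard faces of soficity: the algebraic one, via almost-multiplicative, almost-free maps $\phi \colon G \to \Sym(V)$ into finite symmetric groups measured in the normalized Hamming metric, and the geometric one of statement (b). I take as the working definition that $G$ is sofic if for every finite $F \subset G$ and every $\delta > 0$ there exist a finite set $V$ and a map $\phi \colon G \to \Sym(V)$ with $\phi(1_G) = \Id_V$, $\frac{1}{|V|} |\{ v \in V : \phi(gh)(v) = \phi(g)\phi(h)(v)\}| \ge 1 - \delta$ for all $g, h \in F$, and $\frac{1}{|V|} |\{ v \in V : \phi(g)(v) \ne v \}| \ge 1 - \delta$ for all $g \in F \setminus \{1_G\}$.

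For (a) $\Rightarrow$ (b), fix $r, \varepsilon > 0$. I would take $F = B_\Delta(r)$, choose a small parameter $\delta = \delta(r,\varepsilon)$ to be fixed at the end, and let $\phi \colon G \to \Sym(V)$ be an $(F,\delta)$-approximation. Build the $\Delta$-labeled graph $\GG = (V, E)$ with $E = \{ (v, s, \phi(s)(v)) : v \in V,\ s \in \Delta \}$. The decisive observation is local: say a vertex $v$ is \emph{good} if $\phi(w)(v) = \phi(s_1)\cdots\phi(s_k)(v)$ for every word $w = s_1\cdots s_k$ of length $k \le r$ in $\Delta$, and the points $\{\phi(g)(v) : g \in B_\Delta(r)\}$ are pairwise distinct. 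For good $v$ the assignment $g \mapsto \phi(g)(v)$ is a bijection $B_\Delta(r) \to B_\GG(v,r)$ whose inverse is the required unique $\Delta$-labeled isomorphism $\psi_{v,r}$ with $\psi_{v,r}(v) = 1_G$, so $v \in V(r)$. A union bound over the finitely many words of length $\le r$ (controlled by the multiplicativity defect of $\phi$) and over the pairs of distinct elements of $B_\Delta(r)$ (controlled by the freeness defect) shows that the density of non-good vertices is at most a constant times $\delta$; choosing $\delta$ small then yields $|V(r)| \ge (1-\varepsilon)|V|$.

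For (b) $\Rightarrow$ (a), given a finite $F$ and $\delta > 0$, pick $r$ large enough that $F$ together with all products $gh$ ($g,h \in F$) lies well inside $B_\Delta(r)$, and apply (b) with this $r$ and a small $\varepsilon'$ to obtain a graph $\GG$ with $|V(r)| \ge (1-\varepsilon')|V|$. For $v \in V(r)$ and $g \in F$ set $\phi(g)(v) = \psi_{v,r}^{-1}(g)$, the vertex of $B_\GG(v,r)$ reached from $v$ by following the labeled path spelling $g$; then extend $\phi(g)$ to a permutation of $V$ by completing this partial injection. Because each $\psi_{v,r}$ is a $\Delta$-labeled isomorphism onto $B_\Delta(r)$ sending $v$ to $1_G$, the local charts agree on overlaps, so for every $v$ for which both $v$ and the intermediate vertex $\phi(h)(v)$ lie in $V(r)$ one gets $\phi(gh)(v) = \phi(g)\phi(h)(v)$, and $\phi(g)(v) \ne v$ for $g \in F \setminus \{1_G\}$. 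Since $V(r)$ is $(1-\varepsilon')$-dense and the completion step alters only a vanishing fraction of $V$, the Hamming estimates defining an $(F,\delta)$-approximation follow once $\varepsilon'$ is small.

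The main obstacle in both directions is the interplay of the two scales — the graph radius $r$ on one side and the approximation parameter $\delta$ together with the word length needed to witness a product $gh$ on the other — and, in (b) $\Rightarrow$ (a), the completion of the partial bijections read off from $\GG$ to honest elements of $\Sym(V)$ without spoiling the Hamming bounds. The uniqueness clause in the definition of $V(r)$ is precisely what rigidifies the local transports $\psi_{v,r}$, ensuring that distinct group elements act differently almost everywhere and that the reconstructed maps are well defined.
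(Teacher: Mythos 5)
The paper itself gives no proof of this statement: it is quoted verbatim from \cite[Theorem~7.7.1]{csc-book}, so there is no internal argument to compare against. Your sketch is essentially the standard proof from that reference, passing between the Hamming-metric definition of soficity via almost-multiplicative, almost-free maps $\phi\colon G\to\operatorname{Sym}(V)$ and the graph-theoretic statement (b). The outline is correct, but three points need care before it becomes a proof. First, in (a)$\Rightarrow$(b) the pairwise distinctness of the points $\phi(g)(v)$ for $g\in B_\Delta(r)$ is governed by the freeness defect of $\phi(h^{-1}g)$ with $h^{-1}g$ of word length up to $2r$, and likewise surjectivity of $g\mapsto\phi(g)(v)$ onto $B_\GG(v,r)$ requires following edges backwards (labels in $\Delta=\Delta^{-1}$); so you must take $F\supseteq B_\Delta(2r)$, not $F=B_\Delta(r)$. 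Second, there is a left/right mismatch: the Cayley graph has edges $(g,s,gs)$ (right multiplication), while your graph has edges $(v,s,\phi(s)(v))$, and for a good vertex $\phi(gs)(v)\approx\phi(g)\phi(s)(v)$, which is \emph{not} $\phi(s)\bigl(\phi(g)(v)\bigr)$ in general; the usual fix is to chart by $g\mapsto\phi(g^{-1})(v)$ or to arrange $\phi$ as an approximate anti-homomorphism, and this must be stated, since otherwise the claimed map is not a labeled-graph isomorphism. Third, in (b)$\Rightarrow$(a) the partial map $v\mapsto\psi_{v,r}^{-1}(g)$ must be shown injective on a large subset of $V(r)$ before it can be completed to a permutation; this follows from the symmetry $\psi_{v,r}^{-1}(g)=u\Rightarrow\psi_{u,r}^{-1}(g^{-1})=v$ for $u,v$ both in $V(2r)$, say, but it is not automatic and you do not address it. None of these is a fatal obstruction --- they are exactly the points the reference spends its effort on --- but as written the sketch elides them.
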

\noindent
If $0 \leq r\leq s$ then  $V(s) \subset V(r)$ as every $S$-labeled graph isomorphism $\psi_{v,s} \colon  B_\GG(v,s) \to B_\Delta(s)$ induces by restriction  
an $\Delta$-labeled graph isomorphism $ B_\GG(v,r) \to B_\Delta(r)$. 
We will also need the  following Packing lemma (cf. \cite{weiss-sgds},  \cite[Lemma~7.7.2]{csc-book}, see also \cite{phung-2020} for (ii)).  

\begin{lemma}
\label{l:sofic-B-V}
With the notation as in Theorem~\ref{t:sofic-character}, the following hold 
\begin{enumerate}[\rm(i)]
\item
$B_\GG(v,r) \subset V(kr)$ for all $v \in V((k+1)r)$ and $k \geq 0$;
\item
There exists a finite subset $W \subset V(3r)$ such that the balls $B_\GG(w,r)$ 
are pairwise disjoint for all $w\in W$ and that 
$V(3r) \subset \bigcup_{w\in W} B_\GG(w,2r)$. 
\end{enumerate}
\end{lemma}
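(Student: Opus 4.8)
The plan is to handle the two parts separately; part~(i) carries the real content, while part~(ii) is a standard Vitali-type packing argument.

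For part~(i), fix $k\ge 0$ and $v\in V((k+1)r)$, and let $\psi:=\psi_{v,(k+1)r}\colon B_\GG(v,(k+1)r)\to B_\Delta((k+1)r)$ be the unique label-preserving isomorphism with $\psi(v)=1_G$ supplied by the definition of $V((k+1)r)$; the case $k=0$ is trivial since $B_\Delta(0)=\{1_G\}$ forces $V(0)=V$. Given $w\in B_\GG(v,r)$, I would first note by the triangle inequality that $d_\GG(w,u)\le kr$ implies $d_\GG(v,u)\le r+kr=(k+1)r$, so $B_\GG(w,kr)\subseteq B_\GG(v,(k+1)r)$ and $\psi$ restricts to $B_\GG(w,kr)$. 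Writing $g:=\psi(w)$, a geodesic from $v$ to $w$ (of length $\le r$) maps under $\psi$ to a label path of the same length from $1_G$ to $g$, so $d_{C_\Delta(G)}(1_G,g)\le r$ and $g\in B_\Delta(r)$.

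The key step is to identify the image $\psi(B_\GG(w,kr))$ with the Cayley ball $B_{C_\Delta(G)}(g,kr)=gB_\Delta(kr)$. The point to check is that the intrinsic (induced-subgraph) metrics on $B_\GG(v,(k+1)r)$ and $B_\Delta((k+1)r)$ agree with the ambient graph metrics for pairs of points whose geodesics stay inside these balls: for $u$ with $d_\GG(w,u)\le kr$ every geodesic from $w$ to $u$ remains in $B_\GG(v,(k+1)r)$, and symmetrically for $h$ with $d_{C_\Delta(G)}(g,h)\le kr$ every geodesic from $g$ to $h$ remains in $B_\Delta((k+1)r)$ because $d_{C_\Delta(G)}(1_G,h)\le r+kr$. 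Since $\psi$ preserves the induced-subgraph metric, these two confinement observations together show that $\psi$ carries $B_\GG(w,kr)$ bijectively onto $B_{C_\Delta(G)}(g,kr)$. Composing with the left translation $L_{g^{-1}}\colon h\mapsto g^{-1}h$, which is a $\Delta$-labeled graph automorphism of $C_\Delta(G)$ sending $B_{C_\Delta(G)}(g,kr)$ onto $B_\Delta(kr)$ and $g$ to $1_G$, produces a label-preserving isomorphism $\psi_{w,kr}:=L_{g^{-1}}\circ\psi\vert_{B_\GG(w,kr)}\colon B_\GG(w,kr)\to B_\Delta(kr)$ with $\psi_{w,kr}(w)=1_G$, so $w\in V(kr)$. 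Uniqueness follows because the Cayley ball is deterministic as a $\Delta$-labeled graph (each vertex has exactly one outgoing edge per label), hence any label-preserving map fixing the center is forced on every vertex by reading off the labels along a path from $w$. I expect the genuinely fiddly point to be the verification that $\psi$ sends metric balls exactly onto metric balls, i.e.\ the geodesic-confinement bookkeeping above.

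For part~(ii), I would argue by greedy maximal packing. Since $V$, and hence $V(3r)$, is finite, choose $W\subseteq V(3r)$ maximal subject to the balls $B_\GG(w,r)$, $w\in W$, being pairwise disjoint; such a $W$ exists, is finite, and is pairwise disjoint by construction. For the covering claim, take $u\in V(3r)$. If $u\in W$ then $u\in B_\GG(u,2r)$. Otherwise maximality prevents adjoining $u$, so $B_\GG(u,r)\cap B_\GG(w,r)\neq\emptyset$ for some $w\in W$; choosing $z$ in this intersection gives $d_\GG(u,w)\le d_\GG(u,z)+d_\GG(z,w)\le 2r$, whence $u\in B_\GG(w,2r)$. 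In either case $V(3r)\subseteq\bigcup_{w\in W}B_\GG(w,2r)$, which completes the proof.
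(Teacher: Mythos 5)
Your proof is correct. The paper does not actually prove this lemma — it quotes it from the literature (Weiss; Ceccherini-Silberstein--Coornaert, Lemma~7.7.2) — and your argument, restricting $\psi_{v,(k+1)r}$ to the sub-ball and post-composing with the left translation $L_{g^{-1}}$ for part (i), together with a maximal greedy packing for part (ii), is exactly the standard proof given in those references.
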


\section{Induced local  maps of NUCA} 
\label{s:induced-local-map}
Let $G$ be a group and let $A$ be an alphabet. For every subset $E\subset G$ and $x \in A^E$ we define  $gx \in A^{gE}$ by  $gx(gh)=x(h)$ for all $h \in E$. In particular,    $gA^E=A^{gE}$. 
Let $M\subset G$ and  let $S=A^{A^M}$ be the collection of all maps $A^M \to A$. 
For every finite subset $E \subset G$  and $w \in S^{E}$,  
we define a map  $f_{E,w}^{+M} \colon A^{E M} \to A^{E}$  as follows. For every $x \in A^{EM}$ and $g \in E$, we set: 
\begin{align}
\label{e:induced-local-maps} 
    f_{E,w}^{+M}(x)(g) & = w(g)((g^{-1}x)\vert_M). 
\end{align}
\par 
\noindent 
In the above formula, note that  $g^{-1}x \in A^{g^{-1}EM}$ and $M \subset g^{-1}EM$ since $1_G \in g^{-1}E$ for $g \in E$. Therefore, the map   $f_{E,w}^{+M} \colon A^{E M} \to A^{E}$ is well defined. 
Consequently, for every $s \in S^G$, we have a well-defined induced local map $f_{E, s\vert_E}^{+M} \colon A^{E M} \to A^{E}$ for every finite subset $E \subset G$ which satisfies: 
\begin{equation}
\label{e:induced-local-maps-general} 
    \sigma_s(x)(g) =  f_{E, s\vert_E}^{+M}(x\vert_{EM})(g)
\end{equation}
for every $x \in A^G$ and $g \in E$. Equivalently, we have for all $x \in A^G$ that: 
\begin{equation}
\label{e:induced-local-maps-proof} 
    \sigma_s(x)\vert_E =  f_{E, s\vert_E}^{+M}(x\vert_{EM}). 
\end{equation}

For every $g \in G$, we have a canonical  bijection  $\gamma_g\colon G \mapsto G$ induced by the translation $a \mapsto g^{-1}a$. For each subset $K \subset G$, we denote by $\gamma_{g, K} \colon gK \to K$ the restriction to $gK$ of $\gamma_g$. 
Now let $N\subset G$ and $T=A^{A^N}$. Let $t \in T^G$. With the above notations, we have the following auxiliary lemma.  

\begin{lemma}
    \label{l:compo-id} Suppose that $1_G\in M\cap N$. Then for every $g \in G$, the condition $\sigma_t(\sigma_s(x))(g)=x(g)$ for all $x \in A^G$ is equivalent to the condition 
    $$t(g)\circ  \gamma_{g, N} \circ f^{+M}_{gN, s\vert_{gN}}\circ \gamma^{-1}_{g, NM}=\pi,$$
    where $\pi\colon A^{NM} \to A$ is the projection $z\mapsto z(1_G)$. 
\end{lemma}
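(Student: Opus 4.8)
The plan is to unwind both sides as explicit maps $A^G \to A$ and check that they agree pointwise. The key observation is that the condition $\sigma_t(\sigma_s(x))(g) = x(g)$ for all $x$ is a purely \emph{local} condition at the cell $g$: by Definition~\ref{d:most-general-def-asyn-ca}, the left-hand side $\sigma_t(\sigma_s(x))(g)$ only depends on $\sigma_s(x)$ restricted to $gN$ (since $t(g)$ reads the $N$-neighborhood of $g$), and in turn $\sigma_s(x)\vert_{gN}$ only depends on $x$ restricted to $gNM$. So both sides factor through $x\vert_{gNM}$, and it suffices to compare two maps $A^{gNM} \to A$.

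First I would write out $\sigma_t(\sigma_s(x))(g)$ using the definition: it equals $t(g)\big((g^{-1}\sigma_s(x))\vert_N\big)$. The element $(g^{-1}\sigma_s(x))\vert_N \in A^N$ is, by the definition of the translation action on subsets, nothing but $\gamma_{g,N}$ applied to $\sigma_s(x)\vert_{gN}$; that is, $(g^{-1}\sigma_s(x))\vert_N = \gamma_{g,N}\big(\sigma_s(x)\vert_{gN}\big)$, since $\gamma_{g,N}\colon gN \to N$ is exactly the map realizing $z \mapsto g^{-1}z$ on configurations. Next, by the induced-local-map formula \eqref{e:induced-local-maps-proof} applied with $E = gN$, we have $\sigma_s(x)\vert_{gN} = f^{+M}_{gN, s\vert_{gN}}(x\vert_{gNM})$. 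Finally, the projection $\pi$ composed with the coordinate identification should recover $x(g)$: concretely $x\vert_{gNM}$ corresponds under $\gamma_{g,NM}$ to a configuration $z \in A^{NM}$ with $z(1_G) = x(g)$ (because $\gamma_{g,NM}$ sends $g \mapsto 1_G$, using $1_G \in N\cap M$ so that $g \in gNM$), whence $\pi(\gamma_{g,NM}(x\vert_{gNM})) = x(g)$.

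Stringing these together, the left-hand side becomes
\[
\sigma_t(\sigma_s(x))(g) = t(g)\Big(\gamma_{g,N}\big(f^{+M}_{gN,s\vert_{gN}}(x\vert_{gNM})\big)\Big),
\]
and writing $x\vert_{gNM} = \gamma^{-1}_{g,NM}(z)$ with $z = \gamma_{g,NM}(x\vert_{gNM})$, this is
\[
\Big(t(g)\circ \gamma_{g,N}\circ f^{+M}_{gN,s\vert_{gN}}\circ \gamma^{-1}_{g,NM}\Big)(z),
\]
while the right-hand side $x(g) = z(1_G) = \pi(z)$. Since as $x$ ranges over $A^G$ the restriction $x\vert_{gNM}$ ranges over all of $A^{gNM}$, hence $z$ ranges over all of $A^{NM}$, the original condition holds for all $x$ if and only if the two maps $A^{NM}\to A$ agree on every $z$, which is precisely the displayed operator identity. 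This establishes the equivalence in both directions.

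The routine-but-careful part, and the only place I expect friction, is bookkeeping the domains and the direction of the translation maps: one must check that $M \subset g^{-1}(gN)M$ so that $f^{+M}_{gN, s\vert_{gN}}$ is well defined (this uses $1_G \in N$), that $\gamma_{g,NM}$ and $\gamma_{g,N}$ are compatible with restriction so the composite $\gamma_{g,N}\circ f^{+M}_{gN,s\vert_{gN}}\circ\gamma^{-1}_{g,NM}$ type-checks as a map $A^{NM}\to A^N$, and that the identification $z(1_G)=x(g)$ is correct. These are exactly the compatibilities packaged in \eqref{e:induced-local-maps}–\eqref{e:induced-local-maps-proof} together with the hypothesis $1_G \in M\cap N$, so no genuine difficulty arises beyond tracking indices.
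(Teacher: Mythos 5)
Your proposal is correct and follows essentially the same route as the paper: unwind $\sigma_t(\sigma_s(x))(g)$ via the translation map $\gamma_{g,N}$ and the induced local map $f^{+M}_{gN,s\vert_{gN}}$ applied with $E=gN$, identify $x\vert_{gNM}$ with $(g^{-1}x)\vert_{NM}$ under $\gamma_{g,NM}$, and use $x(g)=(g^{-1}x)(1_G)$. Your explicit remark that $z$ ranges over all of $A^{NM}$ (needed for the ``only if'' direction) is a point the paper leaves implicit, but the argument is the same.
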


\begin{proof}
  For every $g \in G$ and $x \in A^G$, we deduce from Definition~\ref{d:most-general-def-asyn-ca} and the relation \eqref{e:induced-local-maps-proof} that 
  \begin{align*}
     \sigma_t(\sigma_s(x))(g) & = t(g) \left( (g^{-1}\sigma_s(x))\vert_N\right)\\
     & = t(g) \left( \gamma_{g,N}\left( (\sigma_s(x))\vert_{gN}\right)\right)\\
     & =  t(g) \left( \gamma_{g,N}\circ f^{+M}_{gN, s\vert_{gN}}(x\vert_{gNM})\right) \\
     & =  t(g) \circ \gamma_{g,N}\circ f^{+M}_{gN, s\vert_{gN}}(x\vert_{gNM}) \\
     & =  t(g) \circ  \gamma_{g,N}\circ f^{+M}_{gN, s\vert_{gN}}\circ \gamma^{-1}_{g, NM}\left((g^{-1}x)\vert_{NM}\right) 
  \end{align*}
  whence the conclusion as $x(g)= (g^{-1}x)(1_G)$. 
\end{proof}

\section{Invertibility of stably injective local perturbations of CA} 
\label{s:local-gottschalk-sofic}
For the proof of Theorem~\ref{t:main-A}, we  need the following auxiliary result.  
\begin{lemma}
\label{l:reversible-asymp-constant}
Let $M$  be a finite subset of a group $G$. Let $A$ be a finite alphabet and let $S=A^{A^M}$. Suppose that $\sigma_s \colon A^G \to A^G$ is stably injective for some asymptotically constant $s \in S^G$. Then there exists a finite subset $N\subset G$ and an asymptotically constant configuration $t \in T^{G}$ where $T=A^{A^N}$ such that  $\sigma_t \circ \sigma_s=\Id$.     
\end{lemma}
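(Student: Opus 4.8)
The plan is to first upgrade the pointwise hypothesis of stable injectivity into a \emph{uniform}, finite‑range left inverse, and then to exploit the translation invariance of the local rule on the constant region to arrange that this inverse is itself asymptotically constant. At the outset I would dispose of the trivial case where $G$ is finite (every configuration over a finite group is vacuously asymptotically constant), so assume $G$ is infinite; and, replacing $M$ by $M\cup\{1_G\}$ and letting the maps $s(g)$ ignore the new coordinate (which changes neither $\sigma_s$ nor the asymptotic constancy of $s$), assume $1_G\in M$. Let $c\in S$ be the constant value of $s$ outside a finite set $E$. Since the translates $gs$ converge to the constant configuration $c$ as $g$ leaves every finite subset of $G$, we have $c\in\Sigma(s)$, so $\sigma_c$ is injective.

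The principal step is a compactness argument producing a single memory $N$. For each finite $F\subset G$ with $1_G\in F$, set
\[
Z_F = \{ (p,y,y') \in \Sigma(s) \times A^G \times A^G : \sigma_p(y)\vert_F = \sigma_p(y')\vert_F,\ y(1_G) \neq y'(1_G)\}.
\]
Each $Z_F$ is closed in the compact space $\Sigma(s)\times A^G\times A^G$ (the defining conditions involve finitely many coordinates and the continuous map $(p,y)\mapsto\sigma_p(y)$), the family is nested decreasing in $F$, and $\bigcap_F Z_F=\emptyset$: a triple in the intersection would give $p\in\Sigma(s)$ and $y\neq y'$ with $\sigma_p(y)=\sigma_p(y')$, contradicting injectivity of $\sigma_p$. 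By the finite intersection property some $Z_N=\emptyset$ with $1_G\in N$ finite; that is, for every $p\in\Sigma(s)$ the restriction $\sigma_p(y)\vert_N$ determines $y(1_G)$. Applying this to $p=g^{-1}s\in\Sigma(s)$ and using the twisted equivariance $\sigma_{gs}(gx)=g\sigma_s(x)$, a direct identification shows that the composite $\Phi_g:=\gamma_{g,N}\circ f^{+M}_{gN, s\vert_{gN}}\circ \gamma^{-1}_{g, NM}\colon A^{NM}\to A^N$ satisfies $\Phi_g(w)=\Phi_g(w')\Rightarrow w(1_G)=w'(1_G)$ for every $g\in G$.

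Next I would construct $t$. The determination property lets me choose, for each $g$, a map $t(g)\colon A^N\to A$ with $t(g)\circ\Phi_g=\pi$ (prescribe the forced value on the image of $\Phi_g$, and anything off it); by Lemma~\ref{l:compo-id} this is exactly the condition $\sigma_t(\sigma_s(x))(g)=x(g)$ for all $x$, so $\sigma_t\circ\sigma_s=\Id$. To make $t$ asymptotically constant, I use the key observation that when $s\vert_{gN}\equiv c$ a short computation gives $\Phi_g=f^{+M}_{N,c}$ independently of $g$; hence for all such $g$ the requirement on $t(g)$ is the single $g$‑independent equation $t(g)\circ f^{+M}_{N,c}=\pi$, which is solvable (the determination property holds for $p=c\in\Sigma(s)$) by one fixed map $t_0$. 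Since $s\vert_{gN}\equiv c$ fails only when $gN\cap E\neq\emptyset$, i.e.\ for $g$ in the finite set $EN^{-1}$, putting $t(g)=t_0$ off this set yields an asymptotically constant $t\in T^G$, $T=A^{A^N}$, with $\sigma_t\circ\sigma_s=\Id$.

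I expect the main obstacle to be the first step: turning the configuration‑by‑configuration injectivity of $\sigma_p$ across the whole orbit closure into one uniform radius $N$. Here the finiteness of $A$ (hence compactness of $\Sigma(s)\times A^G\times A^G$) together with the nestedness of the $Z_F$ is essential, and the correct choice of "center" statement plus the twisted equivariance is what transports the uniform bound from the base point $1_G$ to every cell $g$. Once $N$ is fixed, the asymptotic constancy of $t$ is a comparatively routine consequence of the translation invariance of $\Phi_g$ on the constant region.
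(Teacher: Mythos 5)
Your proposal is correct, and its second half --- replacing $r(g)$ (in your notation, the ad hoc left inverse) by a single fixed local rule $t_0$ at every $g$ outside the finite set where $gN$ meets the singular region $E$, using that the conjugated local map $\Phi_g$ collapses to the $g$-independent map $f^{+M}_{N,c\vert_N}$ there, and then invoking Lemma~\ref{l:compo-id} --- is exactly the paper's argument. The only real divergence is in the first half: the paper simply cites the reversibility theorem of \cite{phung-tcs} (stable injectivity of a finite-memory NUCA over a finite alphabet implies the existence of a finite-memory left inverse $\sigma_r$ with $\sigma_r\circ\sigma_s=\Id$), whereas you re-derive this from scratch via the nested closed sets $Z_F\subset\Sigma(s)\times A^G\times A^G$ and the finite intersection property in a compact space. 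That compactness argument is sound (each $Z_F$ is clopen since its defining conditions depend on finitely many coordinates, the family is directed decreasing, and stable injectivity kills the total intersection), and it is essentially how such uniformization results are proved; so what you gain is self-containedness at the cost of reproving an imported black box. Two small points worth recording if you write this up: the reduction to $G$ infinite is genuinely needed for your claim that the constant configuration $c$ lies in $\Sigma(s)$ (though you could instead solve $t_0\circ f^{+M}_{N,c\vert_N}=\pi$ by applying the determination property to $p=g_0^{-1}s$ for any single $g_0$ outside $EN^{-1}$, which avoids the case split); and the transport of the determination property from the base point to an arbitrary cell $g$ via $p=g^{-1}s$ and the identity $\sigma_{g^{-1}s}(g^{-1}x)=g^{-1}\sigma_s(x)$ should be checked against the paper's conventions for $\gamma_{g,K}$, but it does come out right.
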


\begin{proof}
As $s\in S^G$ is asymptotically constant, we can find a finite subset $E \subset G$ and a constant configuration $c \in S^G$ such that $s\vert_{G\setminus E} = c\vert_{G\setminus E}$. 
Since $\sigma_s$ is stably injective, we infer from \cite{phung-tcs} that $\sigma_s$ is  reversible, that is, there exists a finite subset $N \subset G$ and 
 $r \in T^G$ where $T=A^{A^N}$ such that 
 $\sigma_r\circ \sigma_s =\Id$. Up to enlarging $M,N$ if necessary, we can suppose without loss of generality that $N=N^{-1}$ and $1_G \in  M \cap N$. Hence, $1_G \in N^2$ and we obtain a canonical projection map $\pi \colon A^{N^2}\to A$ given by $x\mapsto x(1_G)$. 
 For $g \in G \setminus EN$, we have $gN \cap E= \varnothing$ and thus $s\vert_{gN}= c\vert_{gN}$. Consequently, for every $g \in G \setminus EN$, the condition $\sigma_r( \sigma_s(x))(g)=x(g)$ for all $x \in A^G$   is equivalent to the condition $r(g) \circ f^{+N}_{N, c\vert_N} = \pi $ by Lemma~\ref{l:compo-id}. We fix $g_0 \in G \setminus EN$ and 
 define an asymptotically constant configuration $t \in T^G$ by setting $t(g)=r(g_0)$ if $g \in G\setminus EN$ and $t(g)= r(g)$ if $g \in EN$. It follows from our construction that $\sigma_t( \sigma_s(x))(g)=x(g)$ for all $x \in A^G$ and $g \in G \setminus EN$. As $t\vert_{EN}=r\vert_{EN}$, we infer from $\sigma_r\circ \sigma_s=\Id$ that $\sigma_t( \sigma_s(x))(g)=x(g)$ for all $x \in A^G$ and  $g \in  EN$.  
Therefore,  $\sigma_t\circ \sigma_s=\Id$ and the proof is complete. 
\end{proof}
We are now in position to give the proof of the following result which together with Lemma~\ref{l:equivalent-stable-invertible} prove Theorem~\ref{t:main-A}. 

\begin{theorem}
\label{t:main-A-proof}
Let $M$  be a finite subset of a finitely generated  sofic group $G$. Let $A$ be a finite set and let $S=A^{A^M}$. Suppose that $\sigma_s \colon A^G \to A^G$ is stably injective for some asymptotically constant $s \in S^G$. Then $\sigma_s$ is invertible.  
\end{theorem}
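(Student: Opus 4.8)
The plan is to combine the reversibility result of Lemma~\ref{l:reversible-asymp-constant} with the soficity of $G$ via the geometric characterization in Theorem~\ref{t:sofic-character} and the Packing Lemma~\ref{l:sofic-B-V}. By Lemma~\ref{l:reversible-asymp-constant} there is a finite $N \subset G$ and an asymptotically constant $t \in T^G$ with $T = A^{A^N}$ such that $\sigma_t \circ \sigma_s = \Id$. To prove invertibility it suffices to show $\sigma_s \circ \sigma_t = \Id$ as well, since then $\sigma_t = \sigma_s^{-1}$ is a NUCA of finite memory. Equivalently, $\sigma_s$ is surjective: indeed, a left inverse $\sigma_t$ of the injective (hence bijective-onto-image) map $\sigma_s$ becomes a genuine two-sided inverse precisely when $\sigma_s$ is onto. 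So the real content is a \emph{surjunctivity} statement: a stably injective, asymptotically constant local perturbation of a CA over a finitely generated sofic group is surjective.

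The approach to surjectivity is the classical Gromov--Weiss counting argument transplanted to the NUCA setting. First I would fix a finite symmetric generating set $\Delta$ and enlarge $M$, $N$ so that both the memory and the perturbation region $E$ (where $s$ differs from the constant $c$) sit inside some ball $B_\Delta(r_0)$. The induced local maps $f^{+M}_{E', s\vert_{E'}}$ of Section~\ref{s:induced-local-map}, formula~\eqref{e:induced-local-maps-general}, let one compute $\sigma_s$ patch-by-patch using only the labeling of a bounded neighborhood; this is what transfers the dynamics onto a finite sofic approximation graph $\GG = (V,E)$. Using the local graph isomorphisms $\psi_{v,r}$ one pulls the local rule $s$ back to a map on $A^V$, being careful that near the finitely many perturbed cells one uses the genuine $s$-values and everywhere else the constant rule $c$ (this is exactly where asymptotic constancy is essential, since only finitely many ``singular'' cells need special handling and they can be localized inside one good ball via Lemma~\ref{l:sofic-B-V}(i)). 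Stable injectivity must be invoked to guarantee that the induced finite map on the sofic graph is injective on the relevant patches for \emph{every} configuration in $\Sigma(s)$, which is what lets the local injectivity survive the transfer.

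The counting step then runs as follows. If $\sigma_s$ were not surjective, there would be a ``Garden of Eden'' pattern, i.e.\ a finite configuration $p \in A^{B_\Delta(r)}$ not appearing in the image, for some $r$. On a sofic approximant $\GG$ with $|V(3r)| \geq (1-\varepsilon)|V|$, I would use the disjoint balls $B_\GG(w,r)$, $w \in W$, from the Packing Lemma~\ref{l:sofic-B-V}(ii) to show that the finite induced map fails to hit an exponentially large (in $|W|$) set of patterns, so its image has cardinality at most $|A|^{|V|}(1 - |A|^{-c})^{|W|}$ for a suitable constant; meanwhile stable injectivity forces the induced finite map to be injective, hence its image has cardinality exactly $|A|^{\#(\text{domain})}$. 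Balancing the domain/codomain counts using the two-sided density bounds $|V(3r)| \geq (1-\varepsilon)|V|$ and the linear lower bound on $|W|$ yields a contradiction for small $\varepsilon$, exactly as in the Gromov--Weiss proof of surjunctivity for CA.

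The main obstacle I anticipate is the bookkeeping needed to transfer a \emph{non-uniform} rule onto the sofic graph. For ordinary CA a single local rule is pulled back through each $\psi_{v,r}$, but here the rule $s$ varies from cell to cell; asymptotic constancy saves the day because all but finitely many cells obey the constant rule $c$, and Lemma~\ref{l:sofic-B-V}(i) lets one corral the finitely many exceptional cells inside good balls of the approximant where the genuine $\psi_{v,r}$-pullback of $s$ is unambiguous. Ensuring that the left inverse $\sigma_t$ (which is only asymptotically constant, not constant) also transfers compatibly -- so that the finite-graph composition really is the identity on the packed balls -- is the delicate point, and this is precisely where Lemma~\ref{l:compo-id} and the explicit projection identity $t(g)\circ \gamma_{g,N}\circ f^{+M}_{gN, s\vert_{gN}}\circ \gamma^{-1}_{g,NM}=\pi$ should be used to certify the local identity away from the singular region, reducing the global surjectivity to a finite combinatorial count.
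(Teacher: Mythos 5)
Your overall architecture is the paper's: reduce invertibility to surjectivity via the left inverse $\sigma_t$ from Lemma~\ref{l:reversible-asymp-constant}, pull the local rules back onto a sofic approximant through the isomorphisms $\psi_{v,r}$, use the Packing Lemma~\ref{l:sofic-B-V} to produce $|W|$ disjoint balls each missing a pattern, and derive a counting contradiction, with Lemma~\ref{l:compo-id} certifying the lower bound on the image of the induced finite map. However, one step as you describe it would break the count. You propose to handle the finitely many singular cells by ``corralling them inside one good ball'' of the approximant and using the constant rule $c$ everywhere else. If the perturbation appears only once in the approximant, then all but one of the packed balls $B(w,R)$ simulate the \emph{constant} CA $\sigma_c$ --- and $\sigma_c$ is surjective (when $G$ is infinite one has $c\in\Sigma(s)$, so stable injectivity makes $\sigma_c$ injective, hence surjective by Gromov--Weiss), so those balls miss no pattern. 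The deficiency factor is then a single $\left(1-|A|^{-|B_\Delta(R)|}\right)$ rather than the $|W|$-th power you write down; since $|W|$ must grow linearly in $|V|$ to beat the $\varepsilon|V|$ slack in $|V(3R)|\geq(1-\varepsilon)|V|$, a bounded deficiency yields no contradiction.

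What the paper actually does is the opposite: after enlarging $M=N=E=F=B_\Delta(r)$ so that the singular region and the non-surjectivity witness $\Gamma\vert_{B_\Delta(R)}\subsetneq A^{B_\Delta(R)}$ both sit in one ball around $1_G$, it \emph{replicates} the perturbed rule $s\vert_{B_\Delta(R)}$ into \emph{every} packed ball via $\Phi(x)(v)=s(\psi^{-1}_{w,R}(v))(\cdots)$ for $v\in B(w,R)$, using $c$ only off $\overline{W}$. Then $Z_{B(w,R)}\cong\Gamma\vert_{B_\Delta(R)}$ for every $w\in W$, which is exactly what produces the $\left(1-|A|^{-|B_\Delta(R)|}\right)^{|W|}$ deficiency; the left inverse $t$ (and its constant tail $d$) is replicated the same way so that Lemma~\ref{l:compo-id} gives $\Psi\circ\Phi=\pi$. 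Note also that the resulting lower bound is $|Z|\geq|A|^{|V(3R)|}$, not injectivity of $\Phi$ on its whole domain $A^{V(R)}$ as you assert: boundary effects only let you control the projection to the $3R$-good vertices, and the final arithmetic of the contradiction depends on comparing $|V(3R)|$ with $|V(2R)|$, not with $|V(R)|$.
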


\begin{proof} 
Let us denote $\Gamma = \sigma_s(A^G) \subset A^G$. 
Let $\Delta \subset G$ be a finite symmetric generating subset of $G$ such that $1_G \in \Delta$.  
For $n \in \N$, we denote by  $B_\Delta(n)$ 
the   ball  centered at 
$1_G$ of radius $s$ in the Cayley graph $C_\Delta(G)$. 
\par
Since $\sigma_s$ is stably injective and $s\in S^G$ is asymptotically constant, we infer from Lemma~\ref{l:reversible-asymp-constant} that  
there exists a nonempty finite subset $N\subset G$ 
and an asymptotically constant configuration $t \in T^G$, where $T=A^{A^N}$, such that $\sigma_t\circ \sigma_s=\Id$.  We can thus find constant configurations $c \in S^G$ and $d \in T^G$ and a finite subset $F \subset G$ such that $s\vert_{G \setminus F} =  c\vert_{G \setminus F}$ and $t\vert_{G \setminus F} =  d\vert_{G \setminus F}$.  
\par
We suppose on the contrary that $\sigma_s$ is not surjective, i.e., $\Gamma \subsetneq A^G$. 
It follows from \cite[Theorem~4.4]{phung-tcs} that 
$\Gamma$ is closed in $A^G$ with respect to the prodiscrete topology.  
Hence, there exists a finite subset $E \subset G$ 
such that $\Gamma\vert_E \subsetneq A^E$ since otherwise, $\Gamma$ would be a dense closed subset of $A^G$ and thus  $\Gamma=A^G$ which is a contradiction. 
\par 
We fix $r \in \N$ large enough such that $r \geq 1$ and   
$B_\Delta(r) \supset M \cup N \cup E\cup F$.   
Up to enlarging $M$, $N$, $E$, and $F$ if necessary,  we can suppose without loss of generality that 
$$M=N=E=F=B_\Delta(r)$$ and thus in particular $S=T$ and 
\begin{equation}
\label{e:contrary}
\Gamma \vert_{B_S(r)} \subsetneq A^{B_S(r)}. 
\end{equation}
\par
Let $R=4r$. If $\vert A \vert \in \{0,1\}$ then the theorem is trivial. Hence, we   suppose in the rest of the proof that $\vert A \vert \geq 2$. Therefore, $\log \left( 1- \vert A \vert^{-\vert B_\Delta(R) \vert} \right) <0$ and we can thus fix $\varepsilon \in \R $ which satisfies  
\begin{equation} \label{e:contra-hypothesis} 
   0 < \varepsilon < 1 - \frac{\vert B_\Delta(2R) \vert \log \vert A \vert}{\vert B_\Delta(2R) \vert \log \vert A \vert - \log \left( 1- \vert A \vert^{-\vert B_\Delta(R) \vert} \right)}.  
\end{equation}
\par
Since the group $G$ is sofic,   Theorem~\ref{t:sofic-character} implies 
that there exists a finite $S$-labeled graph $\GG=(V,E)$ associated to the pair $(3R, \varepsilon)$ 
such that
\begin{equation} 
\label{e:sofic-V}
    \vert V(3R) \vert \geq (1 - \varepsilon) \vert V \vert,
\end{equation}
where for every $n \in \N$, the subset $V(n)\subset V$ 
consists of all $v \in V$ such that there exists a  $\Delta$-labeled graph
isomorphism $\psi_{v,n} \colon B_\Delta (n) \to B(v,n)$ which satisfies  
$\psi_{v,n}(1_G) = v$ (cf.~Theorem \ref{t:sofic-character}) where we denote $B(v,n)= B_\GG(v,n)$ for all $v \in V$ and $n\in \N$. Thus, for every $g \in G$, we obtain the following  $\Delta$-labeled graph isomorphism where we recall that  $\gamma_{g, K} \colon gK \to K$, for every subset $K \subset G$, is the restriction to $gK$ of the translation map $a \mapsto g^{-1}a$: 
$$ \psi_{v,n}\circ \gamma_{g, B_\Delta(n)} \colon gB_\Delta(n) \to B(v,n).$$ 
\par 
\noindent 
Note 
that $V(m) \subset V(n)\subset V$ for all $m \geq n \geq 0$. 
We obtain from Lemma~\ref{l:sofic-B-V}.(ii) a subset $W \subset V(3R) $ such that 
$B(w,R)$ are pairwise disjoint for all $w\in W$ and that $V(3R) \subset \bigcup_{w \in W}B(w,2R)$. Since $W\subset V(3R)$, we have $|B(w,2R)|=|B_\Delta(2R)|$ for every $w \in W$ and thus 
\begin{align}
    \label{eq:cardinality-w-main-A}
    |W||B_\Delta(2R)| \geq \left| \bigcup_{w \in W}B(w,2R) \right| \geq |V(3R)|. 
\end{align}
\par 
\noindent
Let us denote 
$\overline{W} = \coprod_{w \in W} B(w,R)$ then 
  $\overline{W} \subset V(2R)$  by Lemma~\ref{l:sofic-B-V}.(i). Since $|B(w,R)|=|B_\Delta(R)|$ for all $w \in W$, we find that 
\begin{equation} 
\label{e:sofic-Vr}
\vert V(2R)\vert = \vert W \vert \vert B_\Delta(R)\vert + \vert  
V(2R) \setminus \overline{W}\vert.
\end{equation}
\par 
\noindent 
We construct a  map $\Phi\colon    A^{V(R)} \to A^{V(2R)}$ defined by the following formula for all $x \in A^{V(R)}$ and $v \in V(2R)$ (note that $B(v,r) \subset B(v,R) \subset V(R)$ by Lemma~\ref{l:sofic-B-V}.(i)):  
\begin{align}
\label{eq:Phi-map}
\Phi(x)(v) 
=
\begin{cases}
    s(\psi^{-1}_{w,R}(v)) (x\vert_{B(v,r)}\circ \psi_{v,r}) & \text{ if } v\in B(w,R) \text{  for some } w \in W\\
    c(1_G)\left(x\vert_{B(v,r)} \circ \psi_{v,r}\right) & \text{ if } v \in V(2R) \setminus \overline{W}.  
\end{cases}
\end{align}
By the choice of $B_\Delta(r)=M=F$ and $R=4r$, observe that for every  $u \in B(v, r)\cap B(w,R)$ where $v \in V(2R) \setminus \overline{W}$ and $w \in W$, we have 
\begin{align}
    \label{eq:definition-Phi-compatible}
    s(\psi^{-1}_{w,R}(u)) = c(1_G). 
\end{align}
\noindent 
Let $v\in B(w,R)\cap V(3R)$ for some $w \in W$. Then $ B(v,R) \subset V(2R)$. Let $g= \psi^{-1}_{w,R}(v) \in B_\Delta(R)$. We infer from \eqref{eq:Phi-map} and the $\Delta$-labeled graph isomorphism $\psi_{v, 2r}\circ \gamma_{g,B_\Delta (r)} \colon gB_\Delta(2r) \to B(v,2r)$ the following relation for all  $x \in A^{V(R)}$:   
\begin{align}
\label{eq:Phi-equal-sigma-s-locally-1}
    \Phi(x)\vert_{B(v, r)} \circ \psi_{v,r} \circ \gamma_{g,B_\Delta (r)} = f_{gB_\Delta (r), s\vert_{ g B_\Delta (r)}}^{+B_\Delta (r)}(x\vert_{B(v,2r)} \circ \psi_{v,2r}\circ \gamma_{g,B_\Delta (2r)}). 
\end{align}
\par 
\noindent 
Similarly, let $v\in V(3R) \setminus \overline{W}$ and  $x \in A^{V(R)}$. We infer from \eqref{eq:Phi-map}, \eqref{eq:definition-Phi-compatible}, and the $\Delta$-labeled graph isomorphism $\psi_{v, 2r} \colon B_\Delta(2r) \to B(v,2r)$ that   
\begin{align}
\label{eq:Phi-equal-sigma-s-locally-2}
   \Phi(x)\vert_{B(v, r)} \circ \psi_{v,r} =  f_{B_\Delta (r), s\vert_{B_\Delta (r)}}^{+B_\Delta (r)}(x\vert_{B(v,2r)} \circ \psi_{v,2r})
\end{align}
\par 
\noindent 
Let us denote
\[
Z = \Phi (A^{V(R)}) \subset A^{V(2R)}. 
\]
\par 
\noindent 
We  consider also the  
map $\Psi \colon Z \to A^{V(3R)}$ defined 
for each $z \in Z\subset  A^{V(2R)}$ and $v \in V(3R)$ by the formula: 
\begin{align}
\label{eq:Psi-map}
\Psi(z)(v) = 
\begin{cases}
     t(\psi^{-1}_{w,R}(v)) (x\vert_{B(v,r)}\circ \psi_{v,r}) & \text{ if } v\in B(w,R) \text{  for some } w \in W \\
d(1_G)\left(x\vert_{B(v,r)} \circ \psi_{v,r}\right) & \text{ if } v \in V(3R) \setminus \overline{W}. 
\end{cases}
\end{align}
\par 
\noindent
Because of the inclusions  $V(3R) \subset V(R) \subset V$, we obtain a canonical projection map    $\pi \colon A^{V(R)} \to A^{V(3R)}$ given by $ x\mapsto x\vert_{V{3R}}$ for every $x\in A^{V(R)}$. 
\\[6pt] 
\textbf{Claim:} we have  
\begin{equation}
\label{e:main-surjunctive-proof-1992}
    \Psi \circ \Phi = \pi.
\end{equation} 
Indeed, let $x \in A^{V(R)}$ and $v \in {V(3R)}$. Recall that  $M=N=B_\Delta(r)$ and $R=4r$ so that we have  
\[ 
B_\Delta(r)M = B_\Delta(r)N = B_\Delta(2r)=B_\Delta(R/2). 
\] 
On the other hand, we have by Lemma~\ref{l:sofic-B-V}.(i) that $B(v,R)\subset V(2R)$ and  $B(v,2R) \subset V(R)$ so that in particular $x\vert_{B(v,R)}$ is well-defined. We distinguish two cases according to whether $v \in \overline{W}$. Let us define 
\[
g= \begin{cases}
\psi^{-1}_{w,R}(v) \in B_\Delta(R) &\text{ if } v \in B(w,R)\cap V(3R) \text{ for some }w \in W \\
1_G & \text{ if } v \in  V(3R) \setminus \overline{W}.  
\end{cases}
\]
and fix some $z \in A^G$ extending $x\vert_{B(v,2R)} \circ  \psi_{v,2R} \circ \gamma_{g,B_\Delta (2R)} \in A^{gB_\Delta(2R)}$. 
\\[6pt] 
\textbf{Case 1:} $v \in B(w,R)\cap V(3R)$ for some $w \in W $. By \eqref{eq:Phi-equal-sigma-s-locally-1}, \eqref{eq:Psi-map}, and the relation $\sigma_t \circ \sigma_s=\Id$, we can thus compute: 
\begin{align*}
     \Psi ( \Phi (x))(v) 
     & =  
         t(g) (\Phi (x)\vert_{B(v,r)}\circ \psi_{v,r})
    &  \\ 
     &=   t(g) \left( \gamma_{g, B_\Delta (r)} \left(f_{gB_\Delta (r), s\vert_{ g B_\Delta (r)}}^{+B_\Delta (r)}\left(x\vert_{B(v,2r)} \circ \psi_{v,2r}\circ \gamma_{g, B_\Delta (2r)}\right) \right) \right) \\
     & = \sigma_t(\sigma_s(z)) (  g ) 
   &   \\ 
   & = z(g) &
   \\
    & = \left( x\vert_{B(v,2r)} \circ \psi_{v,2r} \circ \gamma_{g,B_\Delta (2r)}\right) (g) & \\ 
     & = x(\psi_{v,2r}(1_G))
    &  \\ 
     & = x(v). &
\end{align*}
\textbf{Case 2:}  $v \in V(3R)\setminus \overline{W}$. Similarly, we deduce from \eqref{eq:Phi-equal-sigma-s-locally-2} and \eqref{eq:Psi-map} and the relation $\sigma_t\circ \sigma_s=\Id$ that 
\begin{align*}
     \Psi ( \Phi (x))(v) 
     & =  
     d(1_G)\left(\Phi (x)\vert_{B(v,r)} \circ \psi_{v,r}\right)
    &  \\ 
       &=   d(1_G) \left( f_{B_\Delta (r), s\vert_{B_\Delta (r)}}^{+B_\Delta (r)}(x\vert_{B(v,2r)} \circ \psi_{v,2r})  \right) \\
     & = \sigma_t(\sigma_s(z)) (1_G) 
   &   \\ 
   & = z(1_G) &  
   \\
     & = \left( x\vert_{B(v,2r)} \circ \psi_{v,2r}\right) (1_G) & \\ 
     & = x(\psi_{v,2r}(1_G))
    &  \\ 
     & = x(v). &
\end{align*}
\par 
\noindent
Therefore, $\Psi(\Phi)(x)=\pi(x)$ for all $x \in A^{V(R)}$ and Claim \eqref{e:main-surjunctive-proof-1992} is proved. 
Consequently, we deduce  that 
\begin{equation}
\label{e: Z}
\Psi(Z) = \Psi(\Phi(A^{V(R)}))= \pi(A^{V(R)}) = A^{V(3R)}.
\end{equation}
\par
\noindent 
It follows that
\begin{equation}
\label{e:sofic-log(Z)}
\log \vert Z\vert \geq \log \vert A^{V(3R)}\vert= \log \vert A\vert^{\vert V(3R)\vert} =  \vert  V(3R)\vert \log \vert A \vert. 
\end{equation}
\par 
\noindent 
For all  $w \in W$, we have by the construction of $Z$ and $\Phi$ an isomorphism: 
\[
Z_{B(w,R)} \to \Gamma_{B_\Delta(R)}, \quad c \mapsto c \circ \psi_{w,R}.
\]
\par 
\noindent
Since $B_\Delta(r) \subset B_\Delta(R)$, we infer from \eqref{e:contrary}  that  $\Gamma_{B_\Delta(R)} \subsetneq A^{B_\Delta(R)}$ is a proper subset.  It follows that for all $w \in W$, we have: 
\begin{equation}
\label{e:z-sofic}
\vert Z_{B(w,R)} \vert = \vert\Gamma_{B_\Delta(R)}\vert \leq \vert A^{B_\Delta(R)}\vert -1.
\end{equation}
\par
\noindent 
Consequently, we find that:  
\begin{align*}
\log \vert Z\vert& \leq \log \bigl( \vert Z_{\overline{W}} \vert  \vert A^{V(2R) \setminus \overline{W}}\vert \bigr)
&  (\text{as } Z \subset Z_{\overline{W}} \times A^{V(2R)}) \\ 
& = \log \vert  Z_{\overline{W}}\vert  +
\log \vert A^{V(2R) \setminus \overline{W}}\vert 
&  \\ 
& \leq   \log \vert\prod_{w\in W} Z_{B(w,R)}\vert  + \log \vert A^{V(2R) \setminus \overline{W}}\vert 
&  (\text{as } Z_{\overline{W}} \subset \prod_{w\in W} Z_{B(w,R)}) \\ 
& =  \sum_{w\in W}\log \vert Z_{B(w,R)}\vert  + \vert V(2R) \setminus \overline{W}\vert \log \vert A \vert
&    \\
& \leq \vert W \vert  \log \bigl( \vert A \vert^{\vert B_\Delta(R) \vert}  - 1 \bigr) +  \vert V(2R) \setminus \overline{W}\vert  \log \vert A \vert
& (\text{by } \eqref{e:z-sofic}). 
\end{align*}
\par 
\noindent 
Therefore, we deduce that:
\begin{align*}
 \log \vert Z \vert & \leq 
  \left( \vert W \vert \vert B_\Delta(R) \vert  + \vert V(2R) \setminus \overline{W}\vert  \right)\log \vert A \vert +  \vert W \vert \log \left( 1- \vert A \vert^{-\vert B_\Delta(R) \vert} \right) 
\\
& = \vert V(2R) \vert \log \vert A \vert  +  \vert W \vert \log \left( 1- \vert A \vert^{-\vert B_\Delta(R) \vert} \right) 
\end{align*}
where the last equality follows from  \eqref{e:sofic-Vr}. 
Combining the above equality with the relation  \eqref{e:sofic-log(Z)}, we obtain:  
\[
\vert V(3R)\vert \log \vert A \vert  \leq \vert V(2R) \vert \log \vert A \vert  +  \vert W \vert \log \left( 1- \vert A \vert^{-\vert B_\Delta(R) \vert} \right).
\]
\par 
\noindent 
By dividing both sides by $\log \vert A \vert$ and  rearranging the terms,  we have: 
\begin{equation}
\label{e:sofic-V(Z)-2}
     \vert V(2R)\vert  \geq   \vert V(3R)\vert -  \vert W \vert \frac{\log \left( 1- \vert A \vert^{-\vert B_\Delta(R) \vert} \right)}{\log \vert A \vert}. 
\end{equation}
\par 
\noindent 
Since $\log \left( 1- \vert A \vert^{-\vert B_\Delta(R) \vert} \right) <0$ and $V(2R) \subset V$, we obtain     
\begin{align*}
\vert V \vert \geq \vert V(2R)\vert & \geq \vert V(3R)\vert  -  \vert W \vert \frac{\log \left( 1- \vert A \vert^{-\vert B_\Delta(R) \vert} \right)}{\log \vert A \vert} 
& (\text{by } \eqref{e:sofic-V(Z)-2})\\
& \geq  \vert V(3R)\vert  -  \frac{ |V(3R)|\log \left( 1- \vert A \vert^{-\vert B_\Delta(R) \vert} \right)}{ \vert B_\Delta(2R)\vert \log \vert A \vert} 
&  
 (\text{by } \eqref{eq:cardinality-w-main-A}) 
\\
& >  \vert V(3R)\vert (1 - \varepsilon)^{-1}. 
& (\text{by } \eqref{e:contra-hypothesis})
\end{align*}
\par 
\noindent 
It follows that 
$ 
 \vert V(3R)\vert < (1 - \varepsilon) \vert V \vert
$ 
which contradicts \eqref{e:sofic-V}.
Therefore, 
$\sigma_s$ must be  surjective. Since $\sigma_t \circ \sigma_s=\Id$, we conclude that $\sigma_s$ is invertible and the proof is thus complete. 
\end{proof}

\section{Invertibility of stably post-surjective NUCA over sofic groups with uniformly bounded singularity}
\label{s:dual-surjunctive-sofic}

By Lemma~\ref{l:equivalent-stable-invertible}, Theorem~\ref{t:main-C} reduces to the following generalization of the dual surjunctivity result \cite[Theorem~2]{kari-post-surjective}  for post-surjective CA to the  class of stably post-surjective NUCA over sofic group universes. 

\begin{theorem}
\label{t:post-sur-implies-pre-inj-first} 
Let $M$ be a finite subset of a  finitely generated sofic group $G$. Let $A$ be a finite set and let $S=A^{A^M}$. Let $s \in S^G$ be asymptotically constant such that $\sigma_s$ is stably post-surjective. Then $\sigma_s$ is invertible. 
\end{theorem}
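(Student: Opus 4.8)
The plan is to reduce the statement to the surjectivity argument already carried out in the proof of Theorem~\ref{t:main-A-proof}. The key observation is that, once a one-sided inverse is in hand, that proof never uses stable injectivity again: it only uses that $\sigma_s$ and its left inverse $\sigma_t$ are asymptotically constant, that $\sigma_t \circ \sigma_s = \Id$, and that a failure of surjectivity produces a local deficiency via the closed image property \cite[Theorem~4.4]{phung-tcs} together with the sofic counting of Theorem~\ref{t:sofic-character} and Lemma~\ref{l:sofic-B-V}. Thus it suffices to produce a \emph{right} inverse for $\sigma_s$, namely an asymptotically constant $t \in T^G$ of finite memory $N$, where $T = A^{A^N}$, with $\sigma_s \circ \sigma_t = \Id$. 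Granting this, I relabel the pair $(t,s)$ so that $\sigma_t$ plays the role of the inner map and $\sigma_s$ that of its left inverse; after enlarging $M$ and $N$ to a common ball so that $S=T$, the relation $\sigma_s \circ \sigma_t = \Id$ is exactly the hypothesis needed by the counting argument in the proof of Theorem~\ref{t:main-A-proof}, which then yields the surjectivity of $\sigma_t$. Since $\sigma_s \circ \sigma_t = \Id$ already forces $\sigma_t$ to be injective, $\sigma_t$ is bijective, whence $\sigma_s = \sigma_t^{-1}$ is invertible with finite memory.

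The heart of the proof is therefore the construction of this right inverse, which is the exact dual of Lemma~\ref{l:reversible-asymp-constant}. First, stable post-surjectivity of $\sigma_s$ lets me invoke the co-reversibility of finite-memory NUCA, i.e.\ the post-surjective analogue of the reversibility result used in Lemma~\ref{l:reversible-asymp-constant} (cf.~\cite{kari-post-surjective}, \cite{phung-tcs}), to obtain a finite-memory $r \in T^G$ with $\sigma_s \circ \sigma_r = \Id$. I then upgrade $r$ to an asymptotically constant $t$ following the pattern of Lemma~\ref{l:reversible-asymp-constant}: writing $s\vert_{G\setminus F}=c\vert_{G\setminus F}$ for a constant $c \in S^G$ and a finite $F$, I apply Lemma~\ref{l:compo-id} with $\sigma_s$ now as the \emph{outer} map to localize the identity $\sigma_s(\sigma_r(x))(g)=x(g)$. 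For every cell $g$ whose memory translate $gM$ misses $F$, this identity becomes a single translation-invariant condition expressed through the constant value $c(1_G)$; one then replaces $r$ by a fixed constant configuration $d$ on the complement of a suitable finite set while keeping $r$ unchanged near $F$, producing an asymptotically constant $t$ with $\sigma_s \circ \sigma_t = \Id$.

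The main obstacle I anticipate is precisely this upgrading step, where the duality is not perfectly symmetric. In Lemma~\ref{l:reversible-asymp-constant} the localized condition constrains only the single value $t(g)$ of the map being built, tested against a neighborhood $s\vert_{gN}$ of the \emph{given} configuration, so a pointwise surgery on the far region suffices. In the present situation the outer map $\sigma_s$ is the given one, and the localized identity instead constrains the whole neighborhood $r\vert_{gM}$ of the configuration I am modifying; I must verify that setting $t=d$ constant on the far region is consistent on the overlap with the region near $F$ where $t=r$. This is arranged by choosing the transition between the two regions far enough beyond $F$ (by more than the extent of $M$) so that no cell $g$ with $gM$ in the far region sees a value of $t$ coming from the near-$F$ zone; this is the only compatibility one must check. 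Once this consistency is in place the remaining verification that $\sigma_s\circ\sigma_t=\Id$ is purely formal, and the sofic counting machinery of Theorem~\ref{t:main-A-proof} then applies verbatim to finish the proof.
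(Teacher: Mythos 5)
There is a genuine gap, and it sits exactly at the step you flag as the starting point of your reduction: the claim that stable post-surjectivity yields, by a ``co-reversibility'' result, a finite-memory right inverse $r$ with $\sigma_s\circ\sigma_r=\Id$. No such dual of the reversibility theorem is available in the cited sources. What \cite{phung-tcs} provides from stable post-surjectivity is only the \emph{uniform} post-surjectivity statement (its Lemma~13.2): a single finite $E$ such that a one-cell change in the image can be compensated inside $gE$ in the preimage. To pass from that to an actual finite-memory section one must make a globally consistent local choice of preimages, and this is precisely where pre-injectivity is indispensable: in \cite{kari-post-surjective} and in \cite[Theorem~13.4]{phung-tcs} the inverse is constructed only from the \emph{combination} post-surjective $+$ pre-injective. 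Indeed, a finite-memory right inverse forces $\sigma_r$ to be injective, hence (by Theorem~\ref{t:main-A}) bijective, hence $\sigma_s$ invertible --- so over sofic groups the existence of such an $r$ is essentially equivalent to the conclusion of the theorem, and cannot be ``invoked'' as an input. (For general alphabets there is also no duality of the kind used for linear NUCA in Proposition~\ref{pro:intro} that would let you transpose Lemma~\ref{l:reversible-asymp-constant}.)

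The paper's proof runs in the opposite order. It uses the sofic approximation and the map $\Phi$ of Theorem~\ref{t:main-A-proof}, but with a different counting: stable post-surjectivity (via the uniform Lemma~13.2) makes $\Phi$ \emph{surjective} onto $A^{V(3R)}$ by an inductive one-cell correction procedure, while a hypothetical failure of pre-injectivity produces a pair of mutually erasable patterns that caps $|\Phi(A^{V(R)})|$ strictly below $|A|^{|V(3R)|}$; the contradiction shows $\sigma_s$ is pre-injective. Only then does it conclude invertibility from \cite[Theorem~13.4]{phung-tcs}. Your remaining steps (the surgery making the inverse asymptotically constant, and rerunning the counting argument of Theorem~\ref{t:main-A-proof} with the roles of the two maps exchanged) are coherent conditional on the right inverse existing, but the argument as proposed is circular without an independent proof of pre-injectivity.
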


\begin{proof}
Let $\Delta$ be a symmetric generating set of $G$, i.e., $\Delta=\Delta^{-1}$, such that $1_G \in \Delta$. Recall the balls $B_\Delta(n) \subset G$ of radius $n \geq 0$ in the corresponding Cayley graph $C_\Delta(G)$ of $G$. 
By hypothesis, we can find a constant configuration $c \in S^G$ and a finite subset $F\subset G$   such that $s\vert_{G \setminus F} =  c\vert_{G \setminus F}$.   Since $\sigma_s$ is stably post-surjective, we infer from  \cite[Lemma~13.2]{phung-tcs}  a finite subset $E \subset G$  such that for all $g \in G$ and  $x, y\in A^G$ with $y\vert_{G \setminus \{g\}} =\sigma_s(x)\vert_{G \setminus \{g\}}$, there exists $z \in A^G$ such that $\sigma_s(z)=y$ and 
$z\vert_{G \setminus gE}= x\vert_{G \setminus gE}$. 
We claim that $\sigma_s$ is  
pre-injective. Indeed, 
 there exists on the contrary distinct asymptotic configurations $p, q \in A^G$ with  $\sigma_s(p)= \sigma_s(q)$. 
Up to enlarging $M$ and $F$, we can suppose that  $p\vert_{FM} \neq q\vert_{FM}$. 
We fix $r \in \N$ such that $r \geq 1$ and   
$B_\Delta(r) \supset M \cup E\cup F$.   
Up to enlarging $M$, $E$, and $F$ again if necessary,  we can suppose  that 
$$M=E=F=B_\Delta(r).$$
We infer from \eqref{e:induced-local-maps},  \eqref{e:induced-local-maps-general}, and the relation $\sigma_s(p)= \sigma_s(q)$  
that: 
\begin{align} 
\label{e:mutually-erasable-pattern}
   f_{F,s\vert_F}^{+M}(p\vert_{FM}) = f_{F,s\vert_F}^{+M}(q\vert_{FM}).
\end{align}
\par 
\noindent 
Let $R=4r$. The theorem is trivial when $\vert A \vert \leq 1$ so we suppose from now on that $\vert A \vert \geq 2$.  Let us fix $0<\varepsilon< \frac{1}{2}$ small enough
  so that 
  \begin{equation}
  \label{e:post-surjective-var-epsilon-3-1}
    \vert A \vert^{\varepsilon} \left( 1 - \vert A \vert^{-|B_\Delta(R)|} \right)^{\frac{1}{2|B_\Delta(2R)|}}
    < 1,
  \end{equation}
\par 
\noindent 
By Theorem~\ref{t:sofic-character}, there exists a finite $S$-labeled graph $\GG=(V,E)$ associated to the pair $(3R, \varepsilon)$ 
such that
\begin{equation} 
\label{e:sofic-V-2post-surjective}
    \vert V(3R) \vert \geq (1 - \varepsilon) \vert V \vert \geq (1 - \varepsilon)\vert V(R) \vert
\end{equation}
where
  $V(n)$ consists   of $v \in V$ such that there exists a $\Delta$-labeled graph
isomorphism $\psi_{v,n}\colon  B(v,n) \to B_\Delta(n)$ mapping 
 $v$ to $1_G$ and $B(v,n)= B_{\GG}(v,n)$. 
\par
We recall briefly the map $\Phi \colon A^{V(R)} \to A^{V(3R)}$ constructed in the proof of Theorem~\ref{t:main-A-proof}. 
From Lemma~\ref{l:sofic-B-V}.(ii), we have a subset $W \subset V(3R) $ such that the balls 
$B(w,R)$, for all $w\in W$,  are pairwise disjoint  and $V(3R) \subset \bigcup_{w \in W}B(w,2R)$. In particular,  
\begin{equation}
\label{e:v'-and-v-post-surjective-3}
    \vert V(3R) \vert \leq \vert W \vert \vert B_\Delta(2R) \vert.
\end{equation}
\par 
Let 
$\overline{W} = \coprod_{w \in W} B(w,R) \subset V(2R)$. 
For $x \in A^{V(R)}$ and $v \in V(3R)$, let 
\begin{align}
\label{eq:Phi-map-dual}
\Phi(x)(v) 
=
\begin{cases}
    s(\psi^{-1}_{w,R}(v)) (x\vert_{B(v,r)}\circ \psi_{v,r}) & \text{ if } v\in B(w,R) \text{  for some } w \in W\\
    c(1_G)\left(x\vert_{B(v,r)} \circ \psi_{v,r}\right) & \text{ if } v \in V(3R) \setminus \overline{W}.  
\end{cases}
\end{align}
\par 
\noindent
For $v\in B(w,R)\cap V(3R)$ where $w \in W$, we have for  $g= \psi^{-1}_{w,R}(v) \in B_\Delta(R)$ and for all  $x \in A^{V(R)}$ that: 
\begin{align}
\label{eq:Phi-equal-sigma-s-locally-1-dual}
    \Phi(x)\vert_{B(v, r)} \circ \psi_{v,r} \circ \gamma_{g,B_\Delta (r)} = f_{gB_\Delta (r), s\vert_{ g B_\Delta (r)}}^{+B_\Delta (r)}(x\vert_{B(v,2r)} \circ \psi_{v,2r}\circ \gamma_{g,B_\Delta (2r)}). 
\end{align}
\par 
\noindent
For $v\in V(3R) \setminus \overline{W}$ and  $x \in A^{V(R)}$, we have    
\begin{align}
\label{eq:Phi-equal-sigma-s-locally-2-dual}
   \Phi(x)\vert_{B(v, r)} \circ \psi_{v,r} =  f_{B_\Delta (r), s\vert_{B_\Delta (r)}}^{+B_\Delta (r)}(x\vert_{B(v,2r)} \circ \psi_{v,2r})
\end{align}
 
\par 
\noindent
Let $y \in A^{V(3R)}$ and fix an arbitrary $x \in A^{V(R)}$. Let $v_1,..., v_n$ be elements of $V(3R)$. 
We describe inductively below a procedure on $1\leq i\leq n$ which allows us to obtain a sequence $x_0=x, x_1,  ..., x_n$ such that
 $$\Phi(x_i)\vert_{\{v_1, ..., v_i\}}=y\vert_{\{ v_1, ..., v_i\}}, \quad  i=1,..., n.$$ 
 For $i=1,..., n$, let $g_i= \psi^{-1}_{w,R}(v_i)$ if $v_i \in B(w,R)\cap V(3R)$ for some $w \in W$ and $g_i= 1_G$ otherwise. 
By \eqref{eq:Phi-equal-sigma-s-locally-1-dual}, \eqref{eq:Phi-equal-sigma-s-locally-2-dual}, and the choice of $E$,   we can modify, via  the isomorphism $\psi_{v, r}\circ \gamma_{g_i, B_\Delta(r)}$,  the restriction  $x_{i-1}\vert_{B(v_{i}, r)}$ to obtain a new configuration $x_{i}\in A^{V(R)}$ such that  $x_{i}\vert_{V(R)\setminus B(v_{i}, r)}= x_{i-1}\vert_{V(R)\setminus B(v_{i}, r)}$ and 
$$\Phi(x_{i})\vert_{V(3R)\setminus \{v_{i}\}}= \Phi(x_{i-1})\vert_{V(3R)\setminus \{v_{i}\}}, \quad \quad \Phi(x_{i})(v_{i})= y(v_{i}). 
$$
Consequently, $\Phi(x_n)= y$ and we deduce that   $\Phi$ is surjective. In particular,    
  \begin{equation} 
  \label{e:surj-post-phi-surjective-v-3r}
  \vert \Phi(A^{V(R)})\vert = \vert A^{V(3R)}\vert .
  \end{equation}
  \par 
  \noindent 
By  \eqref{e:mutually-erasable-pattern},  we have    
$f_{B_\Delta(r),s\vert_{B_\Delta(r)}}^{+B_\Delta (r)}(p\vert_{B_\Delta(2r)}) = f_{B_\Delta(r),s\vert_{B_\Delta(r)}}^{+B_\Delta (r)}(q\vert_{B_\Delta(2r)})$. Let 
$$\Lambda = \prod_{w \in W}\psi_{w,R}^{-1}(\{p\vert_{B_\Delta(R)}, q\vert_{B_\Delta(R)}\}) \subset A^{\overline{W}}.$$ 
Then for 
every $e \in A^{V(R) \setminus {\overline{W}}}$ and all $x,y \in \Lambda$, we deduce from \eqref{eq:Phi-map-dual} and $R=4r$ that $ 
    \Phi(e \times x) =  \Phi(e \times y)$. 
Consequently,  
\[
\Phi(A^{V(R)}) = \Phi\left(A^{V(R) \setminus {\overline{W}}}\times \prod_{w \in W} \left( A^{B(w,R)}\setminus \psi_{w,R}^{-1}(q\vert_{B_\Delta(R)})\right)\right). 
\]
Therefore,  as $|B(w,R)|=|B_\Delta(R)|$ for all $w \in W$, we obtain the estimation 
\begin{align}
\label{e:surj-post-phi-surjective-v-3r-c}
     \vert \Phi(A^{V(R)})\vert 
     & \leq \left|A^{V(R) \setminus {\overline{W}}}\times \prod_{w \in W} \left( A^{B(w,R)}\setminus \psi_{w,R}^{-1}(q\vert_{B_\Delta(R)})\right)\right|\\
     & \leq \vert A \vert^{\vert V(R) \setminus {\overline{W}}\vert} (\vert A \vert^{\vert B_\Delta(R)\vert}-1)^{\vert W\vert}. \nonumber 
\end{align}
Finally, we find that 
  \begin{align*}
    \vert A\vert^{\vert {V(3R)}\vert } &  =  \vert \Phi(A^{V(R)})\vert  & \text{(by } \eqref{e:surj-post-phi-surjective-v-3r}) \\ 
    & \leq 
   \vert A \vert^{\vert V(R) \vert - \vert{\overline{W}}\vert} (\vert A \vert^{\vert B_\Delta(R)\vert}-1)^{\vert W\vert}  
    & \text{(by } \eqref{e:surj-post-phi-surjective-v-3r-c}) 
    \\
    & = 
   \vert A \vert^{\vert V(R) \vert} (1 - \vert A \vert^{- \vert B_\Delta(R)\vert})^{\vert W\vert}  
    &  \text{(as } |\overline{W}|=|B_\Delta(R)| |W|) 
    \\ 
    & \leq 
     \vert A \vert^{\vert V(R) \vert} (1 - \vert A \vert^{- \vert B_\Delta(R)\vert})^{\frac{\vert V(3R)\vert}{  \vert B_\Delta(2R) \vert}} & \text{(by }\eqref{e:v'-and-v-post-surjective-3}) \\
      &  \leq 
     \vert A \vert^{\vert V(R) \vert} (1 - \vert A \vert^{- \vert B_\Delta(R)\vert})^{\frac{\vert V(R)\vert}{2  \vert B_\Delta(2R) \vert}}
      & (\text{by }\eqref{e:sofic-V-2post-surjective})
      \\
      &  < 
     \vert A \vert^{\vert V(R) \vert}  \vert A \vert^{- \varepsilon \vert V(R) \vert}
      & (\text{by }\eqref{e:post-surjective-var-epsilon-3-1}) \\
      & \leq \vert A \vert^{\vert V(3R)\vert} & \text{(by } \eqref{e:sofic-V-2post-surjective}) 
\end{align*} 
  which is a contradiction. Hence, $\sigma_s$ must be pre-injective. Since $\sigma_s$ is also stably post-surjective by hypothesis, we conclude from \cite[Theorem~13.4]{phung-tcs} that  $\sigma_s$ is invertible and the proof is complete. 
\end{proof}

We can generalize in a direct manner the above proof of Theorem~\ref{t:post-sur-implies-pre-inj-first} to obtain  Theorem~\ref{t:main-D} as follows. 

\begin{proof}[Proof of Theorem~\ref{t:main-D}] 
Let $\Delta$ be a symmetric generating set of $G$, i.e., $\Delta=\Delta^{-1}$, such that $1_G \in \Delta$.  As $\sigma_s$ is stably post-surjective, \cite[Lemma~13.2]{phung-tcs} provides a finite subset $E \subset G$  such that for all $g \in G$ and  $x, y\in A^G$ with $y\vert_{G \setminus \{g\}} =\sigma_s(x)\vert_{G \setminus \{g\}}$, there exists $z \in A^G$ such that $\sigma_s(z)=y$ and 
$z\vert_{G \setminus gE}= x\vert_{G \setminus gE}$. 
As in the proof of Theorem~\ref{t:main-D}, we only need to show that $\sigma_s$ is  
pre-injective. Suppose on the contrary that  there exists a finite subset $K \subset G$ and configurations $p, q \in A^G$ such that  $\sigma_s(p)= \sigma_s(q)$, $p\vert_{G \setminus K}= q\vert_{G \setminus K}$, and $p\vert_{K} \neq q\vert_{K}$.    
Up to enlarging $M$, $E$, and $K$ if necessary,  we can suppose without loss of generality that $1\in M=M^{-1}$ and $M=E=K$.
Since $s$ has uniformly bounded singularity, we can find a constant configuration $c \in S^G$ and finite subset $H \subset G$  such that $s\vert_{HM \setminus H}=c\vert_{HM \setminus H}$ and $M \subset H$. 
We fix $r \geq 1$  large enough so that    
$H \subset B_\Delta(r)$. Let $R=4r$ and we define an asymptotically constant configuration $t\in S^G$ by 
$$t\vert_{B_\Delta(r)}=s\vert_{B_\Delta(r)}, \quad t\vert_{G\setminus B_\Delta(r)} = c\vert_{G \setminus B_\Delta(r)}. 
$$
Up to enlarging $M,E,K$ again, we can assume that 
$M=E=K= B_\Delta(r)$ and let $F=M$. 
From this point, it suffices to apply the rest of the proof of Theorem~\ref{t:post-sur-implies-pre-inj-first} for $t$ instead of $s$ to obtain a contradiction. The proof of Theorem~\ref{t:main-D} is thus complete. 
\end{proof}

\section{Stable surjunctivity of NUCA over sofic groups  with uniformly bounded singularity} 
\label{s:surjunctivity-sofic-general} 
We prove the following key technical lemma which is useful for the proof of Theorem~\ref{t:main-F} and allows us to reduce Theorem~\ref{t:main-B} to Theorem~\ref{t:main-A}. 
\begin{lemma}
\label{l:main-lemma-singular}
Let $A$ be an alphabet and let $G$ be a group. Let $M\subset G$ be a finite subset and $S=A^{A^M}$. Suppose that $\sigma_t \circ \sigma_s=\Id$ for some $s,t \in S^G$ and $s$ has uniformly bounded singularity. Then 
for each $E \subset G$ finite, there exists asymptotically constant configurations $p,q\in S^G$ such that $p\vert_E = s\vert_E$, $q \vert_E= t\vert_E$, and $\sigma_q \circ \sigma_p=\Id$. 
\end{lemma}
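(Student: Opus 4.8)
The plan is to recast the one-sided inverse relation as a family of per-cell local identities and then to surgically replace $s$ and $t$ by asymptotically constant configurations that leave these identities intact on a large window. Throughout I may assume $G$ is infinite, since for finite $G$ every configuration is already asymptotically constant and one simply takes $p=s$, $q=t$. After the harmless enlargement of the memory (pushing $s,t$ forward, which changes neither $\sigma_s$ nor $\sigma_t$, and whose values remain based at the original $M$) I may also assume $M=M^{-1}$ and $1_G\in M$, so that Lemma~\ref{l:compo-id} applies with $N=M$. In this language $\sigma_t\circ\sigma_s=\Id$ is equivalent to the assertion that for every $g\in G$
\[
t(g)\circ\gamma_{g,M}\circ f^{+M}_{gM,\,s|_{gM}}\circ\gamma^{-1}_{g,M^2}=\pi,
\]
and, by $G$-equivariance of constant cellular automata, whenever $s|_{gM}$ is constant the left-hand composite below the leading $t(g)$ becomes $g$-independent. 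The same reformulation applied to $(q,p)$ is the tool for verifying $\sigma_q\circ\sigma_p=\Id$.

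First I would use uniformly bounded singularity to manufacture a constant buffer. Applying the definition to a symmetric finite set containing $E\cup M^3$ yields a finite $F\supseteq E$ and a value $c_0\in S$ such that $s$ is identically $c_0$ on the shell $FM^3\setminus F$, hence also on $FM^2\setminus F$. Letting $c\in S^G$ be the constant configuration $c\equiv c_0$, I define
\[
p(g)=\begin{cases} s(g),& g\in F,\\ c_0,& g\notin F.\end{cases}
\]
Then $p$ is asymptotically constant and $p|_E=s|_E$ as $E\subseteq F$; moreover $p$ and $s$ agree on all of $FM^2$, since on $FM^2\setminus F$ both equal $c_0$. The decisive dichotomy is that every cell $g$ satisfies either $gM\subseteq FM^2$ or $gM\cap F=\varnothing$: if $gM$ meets $F$ then $g\in FM^{-1}=FM$, whence $gM\subseteq FM^2$.

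I would then set $q(g)=t(g)$ on the first (finite) region $\{g:gM\subseteq FM^2\}\subseteq FM^3$ and $q(g)=d_0$ on the complementary region, for a single constant $d_0$ still to be chosen; thus $q$ is asymptotically constant and $q|_E=t|_E$. Now verify the per-cell identity for $(q,p)$. On the first region $p|_{gM}=s|_{gM}$ and $q(g)=t(g)$, so the identity is exactly the one furnished by $\sigma_t\circ\sigma_s=\Id$. On the second region $p|_{gM}\equiv c_0$, and by the equivariance collapse the required identity reduces to the single, $g$-independent condition $d_0\circ f^{+M}_{M,\,c|_M}=\pi$.

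Everything therefore hinges on producing $d_0\in S$ with $d_0\circ f^{+M}_{M,\,c|_M}=\pi$, and this is the main obstacle. My approach is to read $d_0$ off a reference cell $g_*$ at which $s$ is already constant: if $s|_{g_*M}\equiv c_0$, then the $(t,s)$-identity at $g_*$ collapses to $t(g_*)\circ f^{+M}_{M,\,c|_M}=\pi$, so $d_0:=t(g_*)$ works. To locate such a $g_*$ I exploit the thick shell: any $g_*\in FM^2\setminus FM$ has $g_*M\subseteq FM^3$ and $g_*M\cap F=\varnothing$, hence $g_*M\subseteq FM^3\setminus F$ where $s\equiv c_0$. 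Such $g_*$ exists whenever $FM^2\neq FM$, and the equality $FM^2=FM$ would force $f\langle M\rangle\subseteq FM$ for $f\in F$, i.e. $\langle M\rangle$ finite. Thus the construction closes as soon as $\langle M\rangle$ is infinite. The residual degenerate case $\langle M\rangle=:H$ finite I would handle separately: there $\sigma_s$ splits as a product over the $H$-cosets, so choosing $F$ to be a finite union of $H$-cosets containing $E$ destroys the transition region entirely; one may then take the off-$F$ value to be the central local map $z\mapsto z(1_G)\in S$ (legitimate since $1_G\in M$), which defines the identity automaton and is its own inverse, so $d_0$ is this same map. In both cases one obtains $\sigma_q\circ\sigma_p=\Id$ with $p,q$ asymptotically constant and agreeing with $s,t$ on $E$, as required.
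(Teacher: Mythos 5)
Your argument is essentially the paper's own proof: truncate $s$ and $t$ to a finite window around $E$, extend $s$ by the constant value from its singularity shell and $t$ by the value $t(g_*)$ at a reference cell $g_*$ lying deep enough in that shell, and verify $\sigma_q \circ \sigma_p=\Id$ cell by cell via Lemma~\ref{l:compo-id} and the equivariance collapse. Your one genuine addition is the explicit treatment of the degenerate case where $\langle M\rangle$ is finite, in which the reference cell the paper fixes in $FE^2\setminus FE$ need not exist; your coset splitting with the projection local map correctly closes that small gap.
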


\begin{proof}
Up to enlarging $M$, we can clearly assume that $1_G \in  M=M^{-1}$. 
Let $E\subset G$ be a finite subset. Up to enlarging $E$, we can suppose without loss of generality that $ M  \subset E$ and $E=E^{-1}$.  As $s$ has uniformly bounded singularity, there exists a constant configuration $c \in A^{A^M}$  and a finite subset $F\subset G$ containing $E^3$ such that $s\vert_{FE^3\setminus F}= c\vert_{FE^3\setminus F}$. We define an asymptotically constant configuration $p\in S^G$ by setting $p\vert_{FE}=s\vert_{FE}$ and $p\vert_{G\setminus FE}=c\vert_{G\setminus FE}$. 
We fix $g_0 \in FE^2 \setminus FE$ and 
  define $q \in S^G$ by  $q(g)=t(g_0)$ if $g \in G\setminus FE$ and $q(g)=t(g)$ if $g \in FE$. Then $q$ is asymptotic to the constant configuration $d \in S^G$ defined by $d(g)= t(g_0)$ for all $g \in G$. Since $1_G \in M$, we have a projection $\pi \colon A^{M^2}\to A$ given by $x\mapsto x(1_G)$. Since $E\subset E^3 \subset F \subset FE$, we deduce from our construction that $p\vert_E =s\vert_E$ and $q\vert_E =t\vert_E$. To conclude, we only need to check that $\sigma_q\circ \sigma_p=\Id$.
  \par 
  Let $g \in FE^2 \setminus FE$. Then $gE \subset FE^3\setminus F$ since $E=E^{-1}$.  Consequently,  
$s\vert_{gE}= c\vert_{gE}=p\vert_{gE}$ and thus $s\vert_{gM}= c\vert_{gM}=p\vert_{gM}$   since $M \subset E$. Hence, the condition $\sigma_t( \sigma_s(x))(g)=x(g)$ for all $x \in A^G$ is equivalent to $t(g) \circ f^{+M}_{M, c\vert_M} = \pi$ by Lemma~\ref{l:compo-id}. Similarly,  the condition $\sigma_q( \sigma_p(x))(g)=x(g)$ for all $x \in A^G$  amounts to $q(g) \circ f^{+M}_{M, c\vert_M} = \pi $. Since $q(g)=t(g_0)$ and $\sigma_t\circ \sigma_s=\Id$, we conclude from the above discussion that $\sigma_q( \sigma_p(x))(g)=x(g)$ for all $x \in A^G$. 
 \par 
 Let  $g \in  FE$. 
 Since $s\vert_{FE^3\setminus F}= c\vert_{FE^3\setminus F}$,   $p\vert_{FE}=s\vert_{FE}$, and $p\vert_{G\setminus FE}=c\vert_{G\setminus FE}$ by construction, we have  $p\vert_{FE^3}=s\vert_{FE^3}$. In particular, $p\vert_{gM}=s\vert_{gM}$ since $gM \subset (FE)E=FE^2 \subset FE^3$.  Therefore, we can infer from the relations  $\sigma_t\circ \sigma_s=\Id$ and  $q(g)=t(g)$  that $\sigma_q( \sigma_p(x))(g)=x(g)$ for all $x \in A^G$.
 \par 
 Finally, let $g \in G \setminus FE^2$. Since $p\vert_{G\setminus FE}=c\vert_{G\setminus FE}$,  $q\vert_{G\setminus FE}=d\vert_{G\setminus FE}$, and since $c,d$ are constant, we deduce that $q(g)= d(g_0)=t(g_0)$ and $p\vert_{gM}= c\vert_{gM}$. 
 The condition $\sigma_q( \sigma_p(x))(g)=x(g)$ for all $x \in A^G$ is thus  equivalent to $t(g_0) \circ f^{+M}_{M, c\vert_M} = \pi$ by Lemma~\ref{l:compo-id}. But since $\sigma_t\circ\sigma_s=\Id$ and $s\vert_{g_0M}=c\vert_{g_0M}$, another application of  Lemma~\ref{l:compo-id} shows that $t(g_0) \circ f^{+M}_{M, c\vert_M} = \pi$. 
 Thus, $\sigma_t( \sigma_s(x))(g)=x(g)$ for all $x \in A^G$. 
Therefore, we conclude that $\sigma_q\circ \sigma_p=\Id$ and the proof is complete.  
\end{proof}

We are now in position to prove Theorem~\ref{t:main-B}. 

\begin{proof}[Proof of Theorem~\ref{t:main-B}] 
    Since $\sigma_s$ is stably injective by hypothesis, we deduce from \cite[Theorem~A]{phung-tcs} that there exists a finite subset $N \subset G$ and $t \in T^G$, where $T=A^{A^N}$, such that $\sigma_t\circ \sigma_s= \Id$. In particular, it suffices to show that $\sigma_s$ is surjective for $\sigma_s$ to be invertible. Up to enlarging $M$ and $N$, we can suppose that $M=N$ and thus $S=T$. We suppose on the contrary that $\sigma_s$ is not surjective. Since $\Gamma=\sigma_s(A^G)$ is closed in $A^G$ with respect to the prodiscrete topology by \cite[Theorem~4.4]{phung-tcs}, there must exists a finite subset $E \subset G$ such that $\Gamma_{E} \subsetneq A^E$. 
    Since $s$ has uniformly bounded singularity, we infer from Lemma~\ref{l:main-lemma-singular} that there exists asymptotically constant configurations $p,q\in S^G$ such that $p\vert_{EM} = s\vert_{EM}$, $q \vert_{EM}= t\vert_{EM}$, and $\sigma_q \circ \sigma_p=\Id$. Let $\Lambda=\sigma_p(A^G)$ then it follows from $p\vert_{EM} = s\vert_{EM}$ that $\Lambda_E=\Gamma_E\subsetneq A^E$. We deduce that $\sigma_p$ is not surjective. In particular, $\sigma_p$ is not invertible. On the other hand, the condition $\sigma_q \circ \sigma_p=\Id$ shows that $\sigma_q$ is stably injective by \cite[Theorem~A]{phung-tcs}. We can thus apply Theorem~\ref{t:main-A} to deduce that $\sigma_p$ is invertible. Therefore, we obtain a contradiction and the proof is complete. 
\end{proof}


\bibliographystyle{siam}

\end{document}